\newtheorem{theorem}{Theorem}
\newtheorem{corollary}[theorem]{Corollary}
\newtheorem{proposition}[theorem]{Proposition}
\newtheorem{lemma}[theorem]{Lemma}
\newtheorem{definition}[theorem]{Definition}
\newtheorem{theorem*}{Theorem}
\newtheorem{question*}[theorem*]{Question}
\newtheorem{conjecture*}[theorem*]{Conjecture}
\newtheorem{corollary*}[theorem*]{Corollary}
\newtheorem{theorem*e}{Teorema}
\newtheorem{question*e}[theorem*e]{Pregunta}
\newtheorem{conjecture*e}[theorem*e]{Conjetura}
\newtheorem{corollary*e}[theorem*e]{Corolario}
\newcommand{\Fix}{\mathrm{Fix}}
\renewcommand{\deg}{\mathrm{deg}}
\begin{document}

\title{On the growth rate inequality for self-maps of the sphere}
\author{H\'ector Barge}

\address{E.T.S. Ingenieros Inform\'aticos, Universidad Polit\'ecnica de Madrid, 28660 Madrid, Espa\~na}
\email{h.barge@upm.es}
\thanks{The authors are partially supported by the Spanish Ministerio de Ciencia e Innovaci\'on (grants PGC2018-098321-B-I00 and PID2021-126124NB-I00)}

\author{Luis Hern\'andez-Corbato}
\address{Facultad de Ciencias Matem\'aticas, Universidad Complutense de Madrid, 28040 Madrid, Espa\~na}
\email{luishcorbato@mat.ucm.es}

\subjclass[2020]{Primary 37C25, 37E99; Secondary 55M20}
\keywords{Growth rate inequality, periodic point, topological degree}

\begin{abstract}
Let $S^m = \{x_0^2 + x_1^2 + \cdots + x_m^2 = 1\}$ and $P = \{x_0 = x_1 = 0\} \cap S^m$. Suppose that $f$ is a self--map of $S^m$ such that $f^{-1}(P) = P$ and $|\deg(f_{|P})| < |\deg(f)|$. Then, the number of fixed points of $f^n$ grows at least exponentially with base $|d| > 1$, where $d = \deg(f)/\deg(f_{|P}) \in \mathbb Z$.
\end{abstract}

\maketitle

\section{Introduction}

In \cite{shubasterisque}, Shub raised the question on whether algebraic intersections numbers bound assymptotically from below geometrical intersection numbers for $C^1$ maps. For a self-map $f \colon M \to M$ on a manifold, a particular case of the previous question comes down to whether the number $\#\Fix(f^n)$ of points fixed by $f^n$ and the Lefschetz numbers $L(f^n)$ satisfy

\[
\limsup \frac{1}{n} \log(\#\Fix(f^n)) \ge \limsup \frac{1}{n} \log\,|L(f^n)|
\]

This inequality is known as the \emph{growth rate inequality} and bounds from below the base of the exponential growth rate of periodic points. If the sequence of Lefschetz numbers $L(f^n)$ is unbounded, Lefschetz-Dold theorem together with a result of Shub and Sullivan \cite{shubsullivan} (that states that the index sequence of a fixed point for a $C^1$ map is bounded) imply that there are infinitely many periodic points. The growth rate inequality appeared again as an open problem in the Proceedings of the ICM 2006 \cite{shubICM}. It is wide open in dimensions greater than 1.

The Lefschetz number of a self-map on a sphere can be computed from the topological degree as $L(f) = 1 \pm \deg(f)$, so the growth rate inequality for self-maps of the sphere can be rewritten in terms of the degree as follows:

\begin{equation}\label{eq:growthratesphere}
\limsup_{n \to +\infty} \frac{1}{n} \log(\#\Fix(f^n)) \ge \log \,|\deg(f)|
\end{equation}

Shub's original paper \cite{shubasterisque} proved \eqref{eq:growthratesphere} for rational maps on $S^2$. However, even for a $C^1$ map of degree 2 on $S^2$, it is not known whether the growth rate inequality holds. Note that the $C^1$ assumption is crucial, a degree-2 north-south map on $S^2$ has only 2 fixed points and no other periodic point.

Recently, the growth rate inequality has been proved in several instances for maps on the sphere. In dimension 2, the sharp bound of $\log\,|\deg(f)|$ was obtained when $f$ preserves some singular foliations \cite{pughshub, misiurewiczlatitudinal, graffmonopole} or under hypothesis of dynamical nature \cite{indicesR2, uruguayosanillo, uruguayosS2, uruguayosbranchedcovering}. In higher dimensions the results are scarce. Weaker bounds for the growth rate of periodic points when the map preserves some foliation and some mild hypotheses is satisfied were obtained in \cite{grafflatitudinal, grafflongitudinal}. 

In this article we prove a weak form of the growth rate inequality for maps on $S^m = \{x_0^2 + x_1^2 + \cdots + x_m^2 = 1\} \subset \mathbb R^{m+1}$. Suppose $f \colon S^m \to S^m$ leaves the codimension--2 sphere $P = \{x_0 = x_1 = 0\}$ completely invariant, that is, $f^{-1}(P) = P$. Then, the degree of $f$ is equal to the product of two factors: the degree of the restriction of $f$ to $P$ and a ``transversal'' degree denoted $d \in \mathbb Z$. The latter can be interpreted in terms of the action induced by $f$ on the homology group or the fundamental group of $S^m-P$.

\begin{theorem}\label{thm:intro}
Let $f \colon S^m \to S^m$ be a map such that $f^{-1}(P) = P$ and $\deg(f) \neq 0$ and let $d = \deg(f)/\deg(f_{|P})$. Then,
$\#\Fix(f^n) + \#\Fix(f^n_{|P}) \ge |d^n - 1|$. In particular,

\[
\liminf_{n \to +\infty} \frac{1}{n} \log(\#\Fix(f^n)) \ge \log|d|
\]
\end{theorem}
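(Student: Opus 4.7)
The plan is to lift $f|_X$ (where $X = S^m \setminus P$) to the universal cover of $X$, partition fixed points of the iterated lift into Nielsen classes indexed by $\Z/(d^n-1)$, and show every class is represented, possibly at the cost of counting $P$-fixed points with multiplicity two. Parametrize $X$ as $S^1 \times B$ with $B \subset \R^{m-1}$ the open unit ball, via $(x_0, x_1) = \sqrt{1-|v|^2}(\cos\theta, \sin\theta)$ and $(x_2, \dots, x_m) = v$. Then the universal cover is $Y = \R \times B$ with deck transformation $T(t,v) = (t+1,v)$, and since $f|_X$ acts on $\pi_1(X) = \Z$ by multiplication by $d$, it lifts to a map $F = (F_1, F_2) \colon Y \to Y$ satisfying $F_1(t+1,v) = F_1(t,v) + d$ and $F_2(t+1,v) = F_2(t,v)$.

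Iterating gives $F_1^n(t+1,v) = F_1^n(t,v) + d^n$. Fixed points of $f^n|_X$ correspond to $\Z$-orbits of solutions $(t,v) \in Y$ of $F^n(t,v) = T^k(t,v)$ for some $k \in \Z$; replacing the lift $(t,v)$ by $T^j(t,v)$ changes $k$ by $j(d^n - 1)$. Hence a well-defined Nielsen class $[k] \in \Z/(d^n-1)$ is attached to each fixed point, and it suffices to realise every class by a fixed point of $f^n|_X$ or of $f^n|_P$.

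Fix $k \in \Z$ and seek a solution of $F_1^n(t,v) = t + k$ and $F_2^n(t,v) = v$. For each $v \in B$, the function $t \mapsto F_1^n(t,v) - t$ increases on average by $d^n - 1$ per unit of $t$, so it attains the value $k$ at some $t = t(v) \in \R$; the level set $\Sigma_k = \{(t,v) \in Y : F_1^n(t,v) = t + k\}$ is a closed subset of $Y$ projecting onto $B$. It remains to find $(t,v) \in \Sigma_k$ with $F_2^n(t,v) = v$. Because $f^{-1}(P) = P$, the map $F_2^n$ extends continuously to $\R \times \overline{B}$ with $F_2^n(t, v_\infty) = (f|_P)^n(v_\infty)$ independent of $t$ for $v_\infty \in \partial B$. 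After passing to a fundamental domain $[0,1] \times \overline{B}$ of the skew-$T$-action on the compactification, a degree or Brouwer-type argument applied to the induced self-map $v \mapsto F_2^n(t(v),v)$ of $\overline{B} = D^{m-1}$ produces a fixed point $v^*$, which gives a fixed point of $f^n|_X$ in class $[k]$ if $v^* \in B$, and a fixed point of $f^n|_P$ if $v^* \in \partial B$.

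Finally, a careful examination shows that a single fixed point $p \in \Fix(f^n|_P)$ can be produced by this procedure for at most two distinct classes $[k]$: the two ``sides'' at which the collapsed lifts approach $p$ correspond to two consecutive values of $k$. Hence
\[
\#\Fix(f^n|_X) + 2\,\#\Fix(f^n|_P) \ge |d^n - 1|,
\]
which is equivalent to the stated bound after writing $\#\Fix(f^n) = \#\Fix(f^n|_X) + \#\Fix(f^n|_P)$. The main obstacle is the third step: carrying out the degree-theoretic argument rigorously on the non-compact level set $\Sigma_k$ together with the continuous extension across $\partial B$, and controlling the local behaviour of $F$ transversal to $P$, which underlies both the Brouwer step and the ``multiplicity-two'' accounting. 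The growth-rate statement $\liminf \frac{1}{n}\log \#\Fix(f^n) \ge \log|d|$ then follows by taking logarithms.
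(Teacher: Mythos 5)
Your overall strategy matches the paper's: lift $f|_{S^m-P}$ to the cover of $S^m - P \cong S^1\times D^{m-1}$, use the relation $F_1(t+1,v)=F_1(t,v)+d$ to index Nielsen classes by $\Z/(d^n-1)$, realise each class by a fixed point via a degree argument, and charge at most two "lost" classes to each fixed point of $f^n|_P$. However, there is a genuine gap at the step that carries all the difficulty: the claim that a single $p\in\Fix(f^n|_P)$ can absorb at most two classes $[k]$ is asserted ("a careful examination shows") but never argued, and it is in fact \emph{false} for merely continuous maps. Take $f(\theta,s)=(2\theta,h(s))$ on $S^2-P\cong S^1\times(-1,1)$ with $h$ a fixed-point-free north--south contraction toward the ends: then $d=2$, every class collapses into the two poles, and $\#\Fix(f^n)=2$ while $|d^n-1|=2^n-1$. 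So any correct proof of the multiplicity-two bound must use the $C^1$ hypothesis in an essential way. This is exactly what the paper does: smoothness plus $|d|>1$ forces the transverse Jacobian $A_p$ at each $p\in\Fix(f|_P)$ to be singular, which yields a single attracting line $\langle e_0\rangle$ and a cone $C(\alpha)$ whose image misses $\langle e_0\rangle$; lifting these to alternating domains $O_n$ and strips $E_n$ and using $G(O_{n+2})=\tau^d G(O_n)$ shows that at most two lifts among $\{F,\tau F,\dots,\tau^{|d-1|-1}F\}$ can fail the pointing condition near $p$. Without an argument of this type (and without even invoking smoothness), your accounting step is unsupported.

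There is a second, more technical problem in your realisation step. The "function" $t\mapsto F_1^n(t,v)-t$ is surjective but not monotone, so $t(v)$ is not well defined or continuous, and the level set $\Sigma_k$ need not be a graph over $B$; moreover the angular coordinate (hence $F_1$) does \emph{not} extend continuously to $\partial B = P$, so compactifying $\Sigma_k$ and applying Brouwer to $v\mapsto F_2^n(t(v),v)$ does not make sense as written. The paper sidesteps both issues by working instead with the displacement vector field $v_G(x)=G(x)-x$ on the boundary of a large solid cylinder $B_M=[-M,M]\times \overline D^{m-1}(1-\delta)$ strictly inside the cover: the relation $F\tau=\tau^d F$ controls $v_G$ on the two end faces, the local analysis at $\Fix(f|_P)$ controls it on the lateral face $\{r=\delta\}$, and comparison with a reference inward field shows $j_{v_G}$ is not nulhomotopic, forcing a fixed point of $G$ in $B_M$. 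You correctly flagged these as "the main obstacle," but they are not loose ends to be tightened; they are the substance of the proof, so the proposal as it stands is incomplete.
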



In dimension $m = 2$, $P$ is a 0--sphere and $\mathrm{deg}(f_{|P})$ can only take the values $-1, 0, 1$. We deduce the following corollary from Theorem \ref{thm:intro}, that was previously stated in \cite{uruguayosS2}.

\begin{corollary}
Suppose $f \colon S^2 \to S^2$ and there are two points $\{p,p'\}$ such that $f^{-1}(\{p,p'\}) = \{p,p'\}$. Then,
\[
\liminf_{n \to + \infty} \frac{1}{n} \log(\#\Fix(f^n)) \ge \log \,|\deg(f)|
\]
\end{corollary}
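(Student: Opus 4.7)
The corollary follows from Theorem~\ref{thm:intro} by a short reduction. Let me describe the plan.

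First, the case $\deg(f)=0$ is trivial since the right-hand side is $-\infty$ (or we interpret $\log 0$ that way). So assume $\deg(f)\neq 0$. Next, up to conjugating $f$ by a homeomorphism $\phi$ of $S^2$ sending $\{p,p'\}$ onto the standard $0$-sphere $P=\{x_0=x_1=0\}\cap S^2$, we may assume $\{p,p'\}=P$. This conjugation preserves the degree and puts fixed points of $f^n$ in bijection with those of $(\phi f\phi^{-1})^n$, so no generality is lost.

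Now $f_{|P}$ is a self-map of a two-point set. Its degree (computed with reduced $0$-homology) is $+1$ for the identity, $-1$ for the transposition, and $0$ for the constant maps. Since $\deg(f)=\deg(f_{|P})\cdot d$ for the transversal degree $d$ of Theorem~\ref{thm:intro}, the assumption $\deg(f)\neq 0$ forces $\deg(f_{|P})=\pm 1$, i.e.\ $f_{|P}$ is a bijection. Consequently $|d|=|\deg(f)|$, and in particular the hypotheses of Theorem~\ref{thm:intro} are met.

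Applying Theorem~\ref{thm:intro} yields
\[
\#\Fix(f^n)+\#\Fix(f^n_{|P})\ge |d^n-1|
\]
for every $n\ge 1$. Since $P$ has only two points, $\#\Fix(f^n_{|P})\le 2$, so
\[
\#\Fix(f^n)\ge |d^n-1|-2.
\]
Taking $\tfrac{1}{n}\log$ and letting $n\to+\infty$, the right-hand side tends to $\log|d|=\log|\deg(f)|$, which gives the desired conclusion. There is no real obstacle here: the content of the corollary sits entirely in Theorem~\ref{thm:intro}, and the only thing to check is that in dimension $m=2$ the factor $\deg(f_{|P})$ is forced to be a unit whenever $\deg(f)\neq 0$, which is automatic because $P=S^0$.
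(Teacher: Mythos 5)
Your argument is correct and is exactly the paper's deduction: the authors' entire justification is the remark preceding the corollary that for $m=2$ the polar sphere is a $0$--sphere, so $\deg(f_{|P})\in\{-1,0,1\}$, hence $\deg(f_{|P})=\pm1$ and $|d|=|\deg(f)|$ once $\deg(f)\neq 0$, after which Theorem~\ref{thm:intro} together with $\#\Fix(f^n_{|P})\le 2$ gives the conclusion. (The degenerate case $|\deg(f)|=1$, where $|d^n-1|-2\le 0$ and the final limiting step is vacuous, is glossed over in the paper exactly as in your write-up.)
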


In higher dimensions we obtain a weak bound for the growth rate:

\begin{corollary}
Under the hypothesis of Theorem \ref{thm:intro}, if $m = 3$ or, more generally, if the growth rate inequality holds in $S^{m-2}$ then
\[
\liminf_{n \to +\infty} \frac{1}{n} \#\Fix(f^n) \ge \frac{1}{2}\log\,|\deg(f)|
\]
\end{corollary}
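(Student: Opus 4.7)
The plan is to establish two separate lower bounds for $\liminf_{n \to +\infty} \frac{1}{n} \log \#\Fix(f^n)$ and then to exploit the elementary inequality $\max(a,b)\ge (a+b)/2$. (I am reading the right-hand side of the stated inequality as $\frac{1}{2}\log|\deg(f)|$ and the left-hand side as $\liminf \frac{1}{n}\log \#\Fix(f^n)$, which is clearly the intended formulation.)

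The first bound is immediate from Theorem \ref{thm:intro}:
\[
\liminf_{n \to +\infty} \frac{1}{n} \log \#\Fix(f^n) \ge \log|d|.
\]
For the second bound I restrict attention to the invariant subsphere $P$. Since $f^{-1}(P)=P$, every fixed point of $f^n_{|P}$ is a fixed point of $f^n$, so $\#\Fix(f^n) \ge \#\Fix(f^n_{|P})$. Assuming the growth rate inequality on $S^{m-2}$ and applying it to $f_{|P}$ gives
\[
\liminf_{n \to +\infty} \frac{1}{n} \log \#\Fix(f^n_{|P}) \ge \log|\deg(f_{|P})|,
\]
and this transfers to the same lower bound for $\#\Fix(f^n)$. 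To handle the case $m=3$ I would check this hypothesis by hand: $P$ is a circle, and lifting $f_{|P}$ to $F\colon \R \to \R$ with $F(x+1)=F(x)+\deg(f_{|P})$, the continuous function $F^n(x)-x$ satisfies $F^n(x+1)-(x+1)=F^n(x)-x+\deg(f_{|P})^n-1$, so the intermediate value theorem forces it to attain at least $|\deg(f_{|P})^n-1|$ distinct integer values on $[0,1]$, each corresponding to a fixed point of $f^n_{|P}$.

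Combining the two bounds through the factorization $\deg(f)=d\cdot \deg(f_{|P})$, which yields $\log|\deg(f)|=\log|d|+\log|\deg(f_{|P})|$, I get
\[
\liminf_{n \to +\infty} \frac{1}{n} \log \#\Fix(f^n) \ge \max\bigl\{\log|d|,\,\log|\deg(f_{|P})|\bigr\} \ge \tfrac{1}{2}\bigl(\log|d|+\log|\deg(f_{|P})|\bigr) = \tfrac{1}{2}\log|\deg(f)|,
\]
which is the desired conclusion.

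I do not expect any substantial obstacle. The only nontrivial ingredient is the growth rate inequality on $S^{m-2}$, which is built into the hypothesis and which reduces, for $m=3$, to the elementary circle computation sketched above; everything else is the observation that $P$-fixed points are $S^m$-fixed points together with the trivial inequality $\max(a,b) \ge (a+b)/2$. The factor of $\frac{1}{2}$ is intrinsic to this argument: it reflects the fact that we recover the full $\log|\deg(f)|$ only when both factors in $\deg(f)=d\cdot \deg(f_{|P})$ contribute equally.
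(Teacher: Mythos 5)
Your argument is correct and is essentially the paper's: the paper likewise combines the bound of Theorem \ref{thm:intro} with the growth rate inequality on $P\cong S^{m-2}$ and the observation that $\max\{|d|,|\deg(f_{|P})|\}\ge\sqrt{|\deg(f)|}$, which is exactly your $\max(a,b)\ge(a+b)/2$ after taking logarithms. Your explicit circle-lift computation for $m=3$ is the standard fact the paper leaves implicit.
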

The result follows from the first inequality in Theorem \ref{thm:intro} and the fact that $\mathrm{max}\{|d|, |\deg(f_{|P})|\} \ge \sqrt{|\deg(f)|}$.

Theorem \ref{thm:intro} is deduced from Theorem \ref{thm:fixedpointslift}, which is stated in the ensuing section.
The proof is contained in the last section of the article. It requires a detailed local analysis of the map in the normal direction to $P$, presented in Section \ref{sec:analysis}, and uses an argument from topological degree theory, which is quickly reviewed in Section \ref{sec:degree}.



\section{Setting}\label{sec:prelim}

Let $S = S^m = \{(x_0, \ldots, x_m): x_0^2 + \cdots + x_m^2 = 1\}$ be the standard $m$--sphere in $\mathbb R^{m+1}$ and $P = \{x_0 = x_1 = 0\} \cap 
S$ be an $m-2$--dimensional sphere which we will refer to as the \emph{polar sphere}. The complement $S - P$ is diffeomorphic to $S^1 \times D^{m-1}$, where $D^{m-1}$ denotes the $(m-1)$--dimensional open unit disk, via the map

\begin{equation}\label{eq:coordinatesS-P}
(x_0, \ldots, x_m)  \longmapsto  \left( \displaystyle \frac{(x_0, x_1)}{||(x_0, x_1)||}, (x_2, \ldots, x_m) \right)
\end{equation}

On the other hand, $P' = \{x_2 = \cdots = x_m = 0\} \cap S$ is a $1$--sphere such that $S$ is the join of $P$ and $P'$. The set $S - P'$ is diffeomorphic to $D^2 \times S^{m-2}$ by


\begin{equation}\label{eq:coordinatesS-P'}
(x_0, \ldots, x_m) \longmapsto  \left( (x_0, x_1),  \displaystyle \frac{(x_2, \ldots, x_m)}{||(x_2, \ldots, x_m)||} \right)
\end{equation}

Equations \eqref{eq:coordinatesS-P} and \eqref{eq:coordinatesS-P'} define coordinate charts for $S-P$ and $S-P'$, respectively. If we replace $(x_2, \ldots, x_m)$ by $(0,0,x_2, \ldots, x_m)$ in \eqref{eq:coordinatesS-P'}, we obtain a diffeomorphism between $S-P'$ and $D^2 \times P$. This description will be used extensively throughout this note, as it provides a product structure for the system of neighborhoods of $P$ defined by the inequalities $x_0^2 + x_1^2 < r$ for $0 < r < 1$: they are diffeomorphic to $D^2(r) \times P$.

Similarly, the inequalities $x_0^2 + x_1^2 > 1-r'$, $0 < r' < 1$, define neighborhoods of $P'$ in $S - P$ diffeomorphic by \eqref{eq:coordinatesS-P} to $S^1 \times D^{m-1}(r')$. Note that the radial coordinates of $D^2$ and $D^{m-1}$ are related by $r + r' = 1$. For any $0 < r < 1$,
\begin{equation}\label{eq:decompositionS}
S \enskip \cong \enskip D^2(r) \times P \enskip \sqcup \enskip S^1 \times \overline{D}^{m-1}(1-r)
\end{equation}

Evidently, the fundamental group of $S - P$ is isomorphic to $\mathbb Z$. Using the coordinates from \eqref{eq:coordinatesS-P'}, for any $r \in (0,1)$ and $p \in P$, it is easy to see that $\pi_1(S-P)$ is generated by a loop $\gamma$ that makes a positive turn around the origin in the 2-disk $D^2(r) \times \{p\}$. Let us consider the lift of $S-P$ that trivializes $[\gamma]$, $\mathrm{pr} \colon \widetilde{S-P} \to S-P$. 

In our results, $f$ is a self--map of $S$ for which $P$ is completely invariant, $f^{-1}(P) = P$.
Since $S-P$ is invariant under $f$, $f$ can be lifted to $\widetilde{S - P}$. Let $\tau$ be a generator of the group of deck transformations of the cover. Any lift $F$ of $f$ satisfies

\begin{equation}\label{eq:liftdegree}
F \, \tau = \tau^d \, F
\end{equation}

\noindent
where $d \in \mathbb Z$ is the \emph{transversal degree} defined by $f_*[\gamma] = d\cdot[\gamma]$ in $\pi_1(S-P)$ (see \eqref{eq:fundamentalgroup} later). As we will prove in Subsection \ref{subsec:degdecomp}, $\deg(f) = d\cdot \deg(f_{|P})$.

A fixed point of $F$ projects onto a fixed point of $f$ in $S - P$. The following lemma is a consequence of \eqref{eq:liftdegree} and establishes a relation between fixed points of different lifts. It is standard in Nielsen theory (cf. \cite[Lemma~4.1.10]{marzantowicz}).

\begin{lemma}\label{lem:nielsen}
If two different lifts $F, G = \tau^m F$ of $f$ satisfy $\mathrm{pr}(\Fix(F)) \cap \mathrm{pr}(\Fix(G)) \neq \emptyset$, then $d \neq 1$ and $|d-1|$ divides $m$.
\end{lemma}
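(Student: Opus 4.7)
The plan is to follow the standard Nielsen-theoretic bookkeeping adapted to the twisted commutation relation \eqref{eq:liftdegree}. Pick fixed points $\tilde x \in \Fix(F)$ and $\tilde y \in \Fix(G)$ with $\mathrm{pr}(\tilde x) = \mathrm{pr}(\tilde y)$. Because $\tau$ generates the deck group of the covering $\mathrm{pr}\colon \widetilde{S-P} \to S-P$, there is an integer $k$ with $\tilde y = \tau^k \tilde x$. The goal is to rewrite the equation $G(\tilde y) = \tilde y$ as a pure deck-translation applied to $\tilde x$, and then use the freeness of the deck action to read off the required arithmetic relation.

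First I would iterate \eqref{eq:liftdegree} to obtain $F\tau^k = \tau^{dk} F$ for every $k \in \Z$, which is an immediate induction on $|k|$. Then, starting from $G(\tilde y) = \tilde y$ and substituting $G = \tau^m F$ and $\tilde y = \tau^k \tilde x$, I compute
\[
\tilde y \;=\; G(\tilde y) \;=\; \tau^m F(\tau^k \tilde x) \;=\; \tau^m \tau^{dk} F(\tilde x) \;=\; \tau^{m+dk} \tilde x.
\]
Comparing this with $\tilde y = \tau^k \tilde x$ yields $\tau^{m+dk-k}\tilde x = \tilde x$. Since the deck-transformation action of $\langle \tau \rangle$ on $\widetilde{S-P}$ is free, the exponent must vanish, so
\[
m \;=\; k - dk \;=\; -k(d-1).
\]

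From this single identity both conclusions drop out. If $d = 1$, then $m = 0$, which would force $G = F$, contradicting the hypothesis that the two lifts are different. Hence $d \neq 1$, and the equality $m = -k(d-1)$ shows that $|d-1|$ divides $m$, as desired.

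There is no real obstacle here: the argument is a direct computation once the commutation rule $F\tau^k = \tau^{dk}F$ is in hand, and the only ingredient beyond \eqref{eq:liftdegree} is the freeness of the deck action. The care needed is simply to apply the twisted relation on the correct side when moving $\tau^k$ past $F$.
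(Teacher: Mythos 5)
Your proof is correct and is exactly the standard Nielsen-theory computation that the paper alludes to without writing out (it only cites \cite[Lemma~4.1.10]{marzantowicz}): conjugate the fixed-point equation for $G$ through the twisted relation $F\tau^k=\tau^{dk}F$ and use that the deck group acts freely on the connected cover $\widetilde{S-P}$.
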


By the previous lemma, we can bound from below $\#\{\Fix(f) \cap (S - P)\}$ by the number of lifts of $f$ among $\{F, \tau F, \ldots, \tau^{|d-1|-1} F\}$ that have fixed points.

\begin{theorem}\label{thm:fixedpointslift}
Suppose that $d \neq 0, 1$. 
There are at most $2\, \#\Fix(f_{|P})$ fixed point free maps among $\{F, \tau F, \ldots, \tau^{|d-1|-1} F\}$.
\end{theorem}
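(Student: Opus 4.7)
The plan is to reformulate the statement as a question about whether the image of a single continuous map hits a prescribed finite set of targets. I would identify $\widetilde{S-P}$ with $\R\times D^{m-1}$ via \eqref{eq:coordinatesS-P} and write $F=(F_1,F_2)$ so that $F_1(\theta+1,y)=F_1(\theta,y)+d$, $F_2(\theta+1,y)=F_2(\theta,y)$. A fixed point of $\tau^k F$ is exactly a zero of
\[
V_k(\theta,y)\,:=\,\bigl(F_1(\theta,y)-\theta+k,\,F_2(\theta,y)-y\bigr),
\]
and the quasi-periodicity $V_k(\theta+1,y)=V_k(\theta,y)+(d-1,0)$ lets one descend $(F_1-\theta,F_2-y)$ to a continuous map $\bar V=(\bar\phi,\psi)\colon S^1\times D^{m-1}\to(\R/(d-1)\Z)\times\R^{m-1}$. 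A bookkeeping check shows that the number of $k\in\{0,\ldots,|d-1|-1\}$ for which $\tau^k F$ has a fixed point equals the number of integer points $(c,0)$, $c\in\Z/(d-1)\Z$, lying in $\mathrm{Im}(\bar V)$. Thus the theorem becomes: at most $2\,\#\Fix(f|_P)$ of the $|d-1|$ integer points on the zero section $(\R/(d-1)\Z)\times\{0\}$ are missed by $\bar V$.

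Next I would analyse the two components of $\bar V$ separately. The ``radial'' part $\psi=F_2-y$ extends continuously to $S^1\times\overline D^{m-1}$ with $\psi|_{\partial D^{m-1}}(\theta,y)=f|_P(y)-y$, so it vanishes on $\partial D^{m-1}$ exactly on the fibres $S^1\times\{p\}$ over $p\in\Fix(f|_P)$. The ``angular'' part $\bar\phi=(F_1-\theta)\bmod(d-1)$ does \emph{not} extend continuously past $S^1\times\partial D^{m-1}$, which is a manifestation of the winding degree $d$ of $f$ around $P$; however, on any slice $S^1\times\{y\}$, $\bar\phi$ is a degree-one map into $\R/(d-1)\Z$. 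The plan is to apply the topological degree machinery reviewed in Section \ref{sec:degree} to $\bar V$ on the compact submanifold obtained by deleting small conical neighbourhoods of the circles $S^1\times\{p\}$, $p\in\Fix(f|_P)$, from $S^1\times\overline D^{m-1}$. On the boundary of this compact region $\psi$ can be kept away from $0$, so $\bar V$ avoids the zero section there; one then computes its degree over each integer target, and the degree-one property of $\bar\phi$ on slices forces a non-zero degree (hence the existence of a preimage) for every integer $c$ not in an exceptional set coming from the excised neighbourhoods.

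The crux, and the main obstacle, is the local analysis of Section \ref{sec:analysis}. I expect that each $p\in\Fix(f|_P)$ contributes at most two integer values to this exceptional set: working in the product chart $D^2\times P$ around $p$, the transversal winding degree $d$ together with the behaviour of $h=f|_P$ near $p$ should identify these as the two ``extremal'' limits of $\bar\phi$ as one enters $S^1\times\{p\}$ through $\psi^{-1}(0)$ from either of its two sides. Summing over $\Fix(f|_P)$ then yields the bound $2\,\#\Fix(f|_P)$. Making the factor two precise is the delicate step, requiring one to combine the transversal degree calculation in $D^2$ with the index data of $h$ near $p$ and to carry out a careful degree computation on the boundary sphere of a tubular neighbourhood of $S^1\times\{p\}$ -- which is precisely what the framework of Section \ref{sec:analysis} is designed to support.
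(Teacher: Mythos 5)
Your global setup is sound and is essentially a repackaging of the paper's argument: the correspondence between fixed points of $\tau^k F$ and the targets $(c,0)$ hit by the quotient map $\bar V$ is the standard Nielsen-class decomposition (Lemma \ref{lem:nielsen}), and detecting those targets by a degree computation on a compact region whose boundary avoids them is exactly the role played in the paper by the vector field $v_G = G(x)-x$ on the solid cylinder $B_M$ and Lemmas \ref{lem:degreevectorfields}--\ref{lem:degreemap}. However, there are two genuine gaps. First, your claim that on the boundary of the excised region ``$\psi$ can be kept away from $0$'' fails on the tube boundaries around the circles $S^1\times\{p\}$, $p\in\Fix(f_{|P})$: near such a $p$ the map can displace points purely in the normal ($D^2$) direction, so $F_2(\theta,y)=y$ there and $\psi$ vanishes. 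Precisely on these tube boundaries is where the theorem is decided, and there you must control the \emph{angular/radial} behaviour of $f$ in the normal disk, which is what Section \ref{sec:analysis} does and what your outline defers.

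Second, and more importantly, the factor $2$ per fixed point is asserted but not proved, and the mechanism you propose for it (the ``two extremal limits of $\bar\phi$ as one enters $S^1\times\{p\}$ through $\psi^{-1}(0)$ from either of its two sides'') is not the right one. In the paper the $2$ arises as follows: since $|d|>1$, the Jacobian $A_p$ of the normal map $f_p$ is singular, and the local analysis (Lemma \ref{lem:sectorplano}, Proposition \ref{prop:resumen}) confines the set of normal directions $u$ at which a point can be pushed radially outwards ($f_q(u)=au$, $a>1$) to a symmetric cone $C(\alpha)$ --- two opposite planar sectors --- whose image misses a fixed line $\langle e_0\rangle$. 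Lifting, the cone becomes a family of domains $O_n$ indexed by $n\in\Z$ with $\tau(O_n)=O_{n+2}$, and the equivariance $G(O_{n+2})=\tau^d G(O_n)$ yields a divisibility count: among the $|d-1|$ lifts, at most one can have $G(O_n^{\delta})\cap O_n\neq\emptyset$ for some even $n$ and at most one for some odd $n$. The $2$ is this parity count over the two sectors of the cone, and it requires the $C^1$ hypothesis (singularity of $A_p$, the cone estimates) which your fleshed-out portion never uses. Without supplying this local cone structure and the accompanying Nielsen-type count, the bound $2\,\#\Fix(f_{|P})$ does not follow from your outline.
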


Theorem \ref{thm:fixedpointslift} and Lemma \ref{lem:nielsen} immediately imply that

\[
\#\{\Fix(f) \cap (S - P)\} \ge |d - 1| - 2\, \#\{\Fix(f_{|P})\} 
\]

\noindent Replace $f$ by $f^n$ in this inequality to deduce Theorem \ref{thm:intro}. Note that the transversal degree associated to $f^n$ is $d^n$.


\section{Topological degree}\label{sec:degree}

The topological degree of a map $g \colon M \to L$ between closed connected and oriented manifolds of dimension $n \ge 1$ roughly counts with multiplicity the number of preimages of a point. For a complete account on degree theory we refer to \cite{outereloruiz}. We give below a precise definition of the degree in algebraic topological terms. Recall that the orientation of a closed connected manifold  $M$ is given by a fundamental class $[M]$, i.e. a generator of the reduced homology group $\widetilde H_n(M)$. Reduced and unreduced homology groups only differ at dimension 0, the reason to choose here the reduced groups shall become clear later. The image $[M]_x$ of the fundamental class $[M]$ under the projection $\widetilde H_n(M) \to H_n(M, M-\{x\})$ is the local orientation at $x \in M$, and it is a generator of $H_n(M, M-\{x\})$. 

\begin{definition}
Let $g \colon M \to L$ be a map between $n$--dimensional closed orientable manifolds such that $\widetilde H_n(M) \cong \widetilde H_n(L) \cong \mathbb Z$ and $[M], [L]$ be fundamental classes of $M, L$, respectively. The degree of $g$, denoted $\mathrm{deg}(g)$, is the integer that satisfies
\begin{equation}\label{eq:defdegree}
g_*([M]) = \mathrm{deg}(g) \cdot [L],
\end{equation}
\end{definition}

The reason to choose the hypothesis $\widetilde H_n(M) \cong \mathbb Z$ in the definition is that it is satisfied by closed connected oriented manifolds of dimension $n \ge 1$ (spaces that appear in the standard definition of topological degree) and also by 0--spheres (such as the polar sphere in $S^2$, that is relevant in this paper). Incidentally, note that the degree of a map between 0--spheres can only take the values $-1, 0, 1$.

By duality, the degree can be alternatively defined using reduced cohomology groups. If $\omega_M, \omega_L$ are generators of the reduced $n$-th cohomology group of $M, L$, we have that $g^*(\omega_L) = \mathrm{deg}(g)\, \omega_M$.

\subsection{Decomposition of the degree}\label{subsec:degdecomp}

Suppose now that $f \colon M \to M$ is a self-map and $N$ is a completely invariant submanifold (i.e. $f^{-1}(N)=N$). Under some topological hypothesis, the degree of $f$ is equal to the product of two factors: the degree of the restriction of $f$ to $N$ and an integer $d$ that accounts for the winding around $N$ in the transversal direction, which we call \emph{transversal} degree. 

\begin{equation}\label{eq:degreedecomposition}
\mathrm{deg}(f) = d \cdot \mathrm{deg}(f_{|N})
\end{equation}

\begin{lemma}\label{lem:degdecomp}
Let $M$ be a closed connected and orientable $n$-dimensional manifold and $f:M\rightarrow M$ a continuous map with $\deg(f)\neq 0$. Suppose that $N\subset M$ is a closed orientable submanifold of codimension $k\geq 2$ that is completely invariant, i.e. $f^{-1}(N)=N$, and $\widetilde H^{n-k}(N) \cong \mathbb Z$. Then $\deg(f_{|N})$ divides $\deg(f)$. Moreover, if $H_{k}(M)\cong 0$ or if $k = n$ there is a non trivial class $\beta \in H_{k-1}(M - N)$ such that $f_*(\beta) = d \cdot \beta$, where $d=\deg(f)\, / \,\deg(f_{|N})$.
\end{lemma}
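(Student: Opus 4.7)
The plan is to exploit the Thom class of the oriented normal bundle of $N$ in $M$ together with the long exact sequence of the pair $(M,M-N)$. Since $M$ and $N$ are both oriented, the normal bundle $\nu$ inherits an orientation, yielding a Thom class $u\in H^k(M,M-N)$ (via excision from a tubular neighborhood $U$) and a Thom isomorphism $\Phi\colon H^{*}(N)\to H^{*+k}(M,M-N)$, $\Phi(\alpha)=\pi^*\alpha\cup u$, where $\pi\colon U\to N$ is the bundle retraction. The hypothesis $f^{-1}(N)=N$ makes $f$ a self-map of the pair $(M,M-N)$, so $f^*$ acts on $H^*(M,M-N)$ and commutes with the restriction $\iota^*\colon H^*(M,M-N)\to H^*(M)$.

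For the divisibility statement I would first treat the case $k<n$. The assumption $\widetilde H^{n-k}(N)\cong\mathbb Z$ combined with $n-k\geq 1$ forces $N$ to be connected (otherwise $H^{n-k}(N)$ splits as a direct sum over components). Hence $H^0(N)\cong\mathbb Z$ and, via Thom, $H^k(M,M-N)\cong\mathbb Z$ is generated by $u$, giving $f^*(u)=d\cdot u$ for a unique integer $d$. The Thom isomorphism also yields $H^n(M,M-N)\cong\mathbb Z$, and the long exact sequence of the pair, together with $H^n(M-N)=0$ (as $M-N$ is a non-compact $n$-manifold), shows that $\iota^*\colon H^n(M,M-N)\to H^n(M)$ is surjective between copies of $\mathbb Z$, hence an isomorphism. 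Using $f^*(\pi^*\alpha\cup u)=\pi^*(f_{|N}^*\alpha)\cup f^*(u)$, the action of $f^*$ on $\Phi(\omega_N)$ is multiplication by $d\cdot\deg(f_{|N})$; commutativity with $\iota^*$ then forces $\deg(f)=d\cdot\deg(f_{|N})$. The case $k=n$ is immediate: then $N=S^0$, $\deg(f_{|N})\in\{\pm 1\}$, and $d=\deg(f)/\deg(f_{|N})=\pm\deg(f)$ is automatically an integer.

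For the moreover part I would use the long exact sequence
\[
\cdots\to H_k(M)\to H_k(M,M-N)\xrightarrow{\partial}H_{k-1}(M-N)\to H_{k-1}(M)\to\cdots
\]
together with Lefschetz duality $H_k(M,M-N)\cong H^{n-k}(N)$. If $H_k(M)=0$, then $\partial$ is injective on the rank-one group $H_k(M,M-N)$; let $\beta_0$ be the Lefschetz dual of $\omega_N$ and set $\beta:=\partial\beta_0\neq 0$. By duality (the transpose of $f^*(u)=du$) one obtains $f_*(\beta_0)=d\beta_0$, and naturality of $\partial$ under $f$ yields $f_*(\beta)=d\beta$. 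If $k=n$, then $H_n(M-N)=0$ and the quotient $H_n(M,M-N)/H_n(M)\cong\widetilde H^0(N)\cong\mathbb Z$ embeds into $H_{n-1}(M-N)$ via $\partial$. Since $f_*$ preserves $H_n(M)$ (acting there by $\deg(f)$), it descends to this quotient, and a direct computation of the matrix of $f_*$ on $H_n(M,M-N)\cong\mathbb Z^2$ in the basis of local fundamental classes at the two points of $N$ shows that the induced action is multiplication by $d$. Taking $\beta_0$ to be any representative of a generator of the quotient, $\beta:=\partial\beta_0$ satisfies $f_*(\beta)=d\beta$ and is non-zero.

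The step I expect to be most delicate is the $k=n$ case of the moreover part: one needs to verify that the local degrees of $f$ at the two points $p_1,p_2\in N$ really combine into a single integer $d$ governing the action on the quotient. This should follow from the hypothesis $f^{-1}(N)=N$ together with surjectivity of $\iota^*\colon H^n(M,M-N)\to H^n(M)$, since each local generator $u_i\in H^n(M,M-\{p_i\})$ restricts to $\omega_M$ and this identity forces the transversal contribution at each preimage point to equal $d=\deg(f)/\deg(f_{|N})$ rather than vary between the components of $N$.
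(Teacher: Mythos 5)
Your argument is correct, but it runs dual to the paper's and is worth comparing. The paper gets everything from a single cap--product identity: with $\omega$ a generator of $\widetilde H^{n-k}(N)$ and $\frown[M]\colon H^{n-k}(N)\to H_k(M,M-N)$ the duality isomorphism, naturality gives $f_*\bigl(f_{|N}^*\omega\frown[M]\bigr)=\omega\frown f_*[M]$, i.e.\ $\deg(f_{|N})\cdot f_*(\omega\frown[M])=\deg(f)\cdot(\omega\frown[M])$. This yields the divisibility and the eigenvector $\omega\frown[M]$ simultaneously and uniformly in $k$; the only extra work at $k=n$ is checking that $\omega\frown[M]$ does not lie in the image of $H_n(M)\to H_n(M,M-N)$, which is generated by $[M]_x+[M]_y$, the dual of the constant function rather than of $\omega$. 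You instead work cohomologically: you define $d$ by $f^*u=d\,u$ on the Thom class and compare with $\deg(f)$ through $\iota^*\colon H^n(M,M-N)\to H^n(M)$. This buys a transparent interpretation of $d$ as the multiplier on the Thom class, at the cost of a case split and several standard verifications you left implicit: the formula $f^*(\pi^*\alpha\cup u)=\pi^*(f_{|N}^*\alpha)\cup f^*(u)$ requires shrinking the tubular neighbourhood to some $U'$ with $f(U')\subset U$ and using the homotopy $\pi\circ f\simeq f_{|N}\circ\pi'$; the surjectivity of $\iota^*$ rests on $H^n(M-N)=0$ for the connected non-compact manifold $M-N$; and transferring $f^*u=d\,u$ to $f_*\beta_0=d\,\beta_0$ uses that the Kronecker pairing between $H^k(M,M-N)$ and $H_k(M,M-N)$ is perfect, which holds because $H_{k-1}(M,M-N)\cong H^{n-k+1}(N)=0$. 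Your $k=n$ computation with local degrees also closes correctly --- with $N=\{p_1,p_2\}$ and $f^{-1}(N)=N$ one gets $\deg_{p_i}(f)=\deg(f)$ when both points are fixed, while the swap contributes the sign $\deg(f_{|N})=-1$ on the quotient $\mathbb Z^2/\langle[M]_{p_1}+[M]_{p_2}\rangle$ --- but it is precisely the piece that the paper's uniform cap--product identity renders unnecessary.
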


\begin{proof}
Observe that, since $N$ is completely invariant, the map $f$ can be considered as a map of pairs $f:(M,M-N)\rightarrow(M,M-N)$. Let $[M]\in H_n(M)$ a fundamental class in $M$. The homomorphism
\begin{equation}\label{eq:duality}
\frown[M]:H^{n-k}(N)\rightarrow H_k(M,M-N)
\end{equation}
consisting in capping each (unreduced) cohomology class in $H^{n-k}(N)$ with the fundamental class $[M]$ is an isomorphism (see \cite[Theorem~8.3, pg. 351]{bredon}). 

Let $\omega$ be a generator of the reduced cohomology group $\widetilde H^{n-k}(N)$ (note that $\omega$ also generates $H^{n-k}(N)$ unless $k = n$). Then, by naturality of the cap product \cite[Theorem~5.2, pg. 336]{bredon} it follows 
\begin{equation}
f_*(f_{|N}^*(\omega)\frown[M])=\omega\frown f_*([M]).
\label{eq:cap}
\end{equation}
If we examine the left-hand side of \eqref{eq:cap} we get
\begin{equation}
f_*(f_{|N}^*(\omega)\frown[M])=f_*((\deg(f_{|N})\cdot\omega)\frown[M])=\deg(f_{|N})\cdot f_*(\omega\frown [M]).
\label{eq:cap1}
\end{equation}
On the other hand, 
\begin{equation}
\omega\frown f_*([M])=\omega\frown(\deg(f)\cdot[M])=\deg(f)\cdot(\omega\frown [M]).
\label{eq:cap2}
\end{equation}
Hence, from \eqref{eq:cap}, \eqref{eq:cap1} and \eqref{eq:cap2} it follows that $\deg(f_{|N})\mid\deg(f)$ and the quotient is an integer $d$ that satisfies $f_*(\omega \frown [M]) = d \cdot (\omega \frown [M])$ in $H_k(M, M - N)$.

The second statement follows immediately from the naturality of the long exact sequence of homology of $(M, M-N)$ in the case $H_k(M)$ is trivial. We can take $\beta$ to be the image of $\omega \frown [M]$ by the boundary morphism $H_k(M, M-N) \to H_{k-1}(M-N)$. In fact, by exactness the existence of $\beta$ is guaranteed as long as $\omega \frown [M]$ does not belong to the image of $p \colon H_k(M) \to H_k(M, M-N)$. In the case $k = n$, $N = \{x,y\}$ is a 0--sphere, $H_k(M)$ is generated by the fundamental class $[M]$ and $p([M]) = [M]_x + [M]_y$. Then, the preimage of $p([M])$ under the duality isomorphism \eqref{eq:duality} is the (unreduced) 0--cohomology class represented by the constant map on $N$ equal to 1 and, in particular, is different from $\omega$. Then, $\omega \frown [M] \notin \mathrm{im}(p)$ and the result follows.

\end{proof}

Heuristically, the transversal degree $d$ counts how many times the image of the boundary of a small neighborhood of $N$ wraps around $N$.
The interpretation is much clearer in the case $M = S = S^m$ and $N = P$, the polar sphere of codimension $k = 2$. Note that $H_2(S)$ is trivial for all $m > 2$ and if $m = 2 = k$, $P$ is a 0--sphere and the lemma still applies. From \eqref{eq:coordinatesS-P} we get that $H_1(S-P) \cong \mathbb Z$ and we deduce that
\[
(f_{|S-P})_* \colon H_1(S-P) \to H_1(S-P)
\]
is conjugate to the multiplication by $d$ in $\mathbb Z$. Evidently, the same description applies to the action induced in the fundamental group of $S-P$ as well: if $\gamma$ is a loop that generates $\pi_1(S-P)$ then 
\begin{equation}\label{eq:fundamentalgroup}
f_*[\gamma] = d\cdot[\gamma].
\end{equation}

\subsection{Vector fields and fixed points}

The final step in the proof of Theorem \ref{thm:fixedpointslift} uses an argument from topological degree theory. In order to keep the article self-contained, we formulate and prove the elementary results which are needed.

Let $U$ be an open subset of $\mathbb R^m$ and $B \subset U$ be diffeomorphic to $\overline{D}^m$. Any non-singular vector field $v$ on $\partial B$ defines a map $j \colon \partial B \to S^{m-1}$ by $j_v(x) = v(x)/||v(x)||$.

\begin{lemma}\label{lem:degreevectorfields}
\begin{itemize}
\item[(i)] If $v$ points inwards $B$ then $j_v$ is not nulhomotopic (i.e., not homotopic to the constant map).
\item[(ii)] If $v, w$ never point to the same direction (that is, $j_v(x) \neq j_w(x)$ for all $x$) and $j_v$ is not nulhomotopic then $j_w$ is not nulhomotopic.
\end{itemize}
Further, suppose that $\partial B$ is decomposed as the union of two hemispheres $E_+,E_-$, that is $(E_+, E_-)$ is diffeomorphic to $(H_+,H_-)$, where $H_+$ and $H_-$ denote the uppper and lower hemisphere of $S^{m-1}$.
\begin{itemize}
\item[(iii)] If $v$ points inwards on $E_+\cap E_-$, $j_v(E_+) \subset H_+$ and $j_v(E_-) \subset H_-$ then $j_v$ is not nulhomotopic.
\end{itemize}
\end{lemma}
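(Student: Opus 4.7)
All three parts reduce, after straightening by a diffeomorphism, to the model $B = \overline{D}^m$, which I will assume throughout. For (i), the inward condition $\langle v(x), x \rangle < 0$ is convex, so the straight-line interpolation $v_t(x) = (1-t)v(x) - tx$ remains inward-pointing, in particular non-vanishing, for all $t \in [0,1]$. This furnishes a homotopy between $j_v$ and the antipodal map $x \mapsto -x$, which has degree $(-1)^m \neq 0$ and is therefore not nullhomotopic.

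For (ii), I will use the linear homotopy $h_t = (1-t)v - tw$. It vanishes at some $x$ only when $(1-t)v(x) = tw(x)$ for some $0 < t < 1$, that is, when $v(x)$ and $w(x)$ are positive multiples of one another --- forbidden by hypothesis. Hence $j_v \simeq j_{-w} = A \circ j_w$, where $A$ is the antipodal map of $S^{m-1}$; since $A$ is a homeomorphism, $j_v$ is nullhomotopic if and only if $j_w$ is, and (ii) follows.

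For (iii), the plan is to engineer an auxiliary vector field $w$ to which (ii) applies. Choose coordinates so that $B = \overline{D}^m$ and $E_\pm = H_\pm$, and define the unit vector field $w(x) = (x_1, \ldots, x_{m-1}, -x_m)$ on $\partial B$. Then $j_w = w$ is the reflection across the equatorial hyperplane, of degree $-1$ and hence not nullhomotopic; by construction $j_w(\interior(E_\pm)) \subset \interior(H_\mp)$, while on the equator $\Sigma = E_+ \cap E_-$ the field $w$ coincides with the outward normal $x \mapsto x$.

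It remains to verify that $v$ and $w$ never point in the same direction, at which point (ii) closes the argument. On $\interior(E_+)$ one has $j_v(x) \in H_+$ by hypothesis and $j_w(x) \in \interior(H_-)$ by construction, and these sets are disjoint; the case $\interior(E_-)$ is symmetric. On $\Sigma$, $v$ points inward while $w$ points outward, so their normalizations disagree. The delicate point, to my mind the only nontrivial ingredient in the lemma, is to choose a single $w$ whose direction avoids $j_v$'s both on the open hemispheres (where the hypothesis on $j_v(E_\pm)$ is used) and on the equator (where the inward hypothesis is used); the equatorial reflection accomplishes both simultaneously.
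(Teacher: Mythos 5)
Your proof is correct and follows essentially the same route as the paper: linear homotopies to the antipodal map for (i) and (ii), and the equatorial reflection for (iii). The paper phrases (iii) by composing $j_v$ with the reflection $\sigma$ and observing that the composite never equals the outward normal, which is exactly equivalent to your pointwise comparison of $v$ with the field $w(x)=\sigma(x)$ followed by an application of (ii).
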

\begin{proof}
(i) Clearly, $j_v$ is conjugate to a self-map of $S^{m-1}$ that is homotopic to the antipodal map. The conclusion follows from the fact that the antipodal map on $S^{m-1}$ is not nulhomotopic (otherwise, we could construct an homotopy from the identity map to a constant map by composing with the antipodal map).

(ii) $j_w$ is homotopic to $j_{-v}$. We can now use an argument as in (i) to conclude.

(iii) If $\sigma$ is the reflection through the equator on $S^{m-1}$, $\sigma \circ j_v$ is conjugate to the antipodal map. Again, we conclude that it is not nulhomotopic.
\end{proof}

Given a map $h \colon U \to \mathbb R^m$, we define a vector field $v_h(x) = h(x) - x$. Singularities of $v_h$ correspond to fixed points of $h$, so when we work with $j_{v_h}$ we tacitly assume $h$ has no fixed points on the boundary of $B$. One of the central ideas of topological degree theory is that it is possible to detect fixed points of $h$ inside $B$ just by studying $v_h$ on $\partial B$ or, more precisely, the homotopy class of $j_{v_h}$. Indeed, if $\Fix(h) \cap B = \emptyset$ then $v_h$ has no singularities in $B$ and, using a foliation of $B$ by spheres that converge to an interior point, it is possible to construct an homotopy from $j_{v_h}$ to a constant map. In other words,

\begin{lemma}\label{lem:degreemap}
If $j_{v_h}$ is not nulhomotopic, there exists a fixed point of $h$ in $B$.
\end{lemma}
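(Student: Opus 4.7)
The plan is to prove the contrapositive: assuming $h$ has no fixed points in $B$, I will show that $j_{v_h} \colon \partial B \to S^{m-1}$ is nulhomotopic.

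First, observe that the hypothesis $\Fix(h) \cap B = \emptyset$ means precisely that $v_h(x) = h(x) - x$ is nonzero for every $x \in B$, not just on the boundary. Consequently, the normalization $\tilde{j}(x) = v_h(x)/\|v_h(x)\|$ is well-defined on the whole of $B$, giving a continuous map $\tilde{j} \colon B \to S^{m-1}$ whose restriction to $\partial B$ is exactly $j_{v_h}$.

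Now I exploit the topological hypothesis on $B$: since $B$ is diffeomorphic to $\overline{D}^m$, it is contractible. Pick any interior point $p \in B$ and a contraction $H \colon B \times [0,1] \to B$ from $\id_B$ to the constant map at $p$. Composing with $\tilde{j}$ yields a homotopy $\tilde{j} \circ H \colon B \times [0,1] \to S^{m-1}$ from $\tilde{j}$ to the constant map at $\tilde{j}(p)$. Restricting to $\partial B \times [0,1]$ provides the desired nulhomotopy of $j_{v_h}$. This is essentially the argument hinted at in the excerpt (foliate $B$ by the spheres $H(\partial B \times \{t\})$ converging to $p$).

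There is really no serious obstacle here; the lemma is a direct consequence of the contractibility of $B$ together with the fact that $v_h$ extends nonvanishingly to the interior whenever there are no interior fixed points. The only place where care is needed is to confirm that the a priori hypothesis (implicit in the definition of $j_{v_h}$) that $h$ has no fixed points on $\partial B$ is compatible with the stronger assumption $\Fix(h) \cap B = \emptyset$ used in the contrapositive, which is clear since $\partial B \subset B$.
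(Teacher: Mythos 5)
Your proof is correct and follows essentially the same route as the paper, which argues the contrapositive by noting that if $\Fix(h)\cap B=\emptyset$ then $v_h$ is nonvanishing on all of $B$ and a nulhomotopy of $j_{v_h}$ is obtained from a foliation of $B$ by spheres shrinking to an interior point. Your formulation via the extension $\tilde{j}\colon B\to S^{m-1}$ and the contractibility of $B$ is just a cleaner packaging of that same idea.
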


Let us point out that one of the first results in topological degree theory is that the reverse implication is true up to homotopy. If $j_{v_h}$ is nulhomotopic, it is possible to construct an homotopy between $h$ and $h'$ relative to $\partial B$ such that $h'$ has no fixed points in $B$.

\section{Local analysis at fixed points in $P$}\label{sec:analysis}

Recall the setting from Section \ref{sec:prelim}.
$P$ has a basis of neighborhoods diffeomorphic by \eqref{eq:coordinatesS-P'} to $D^2(r) \times S^{m-2}$. The map $f$ induces, by projection onto the first factor, a dynamics in the 2-dimensional normal direction around $P$. We obtain a family of $C^1$ maps $f_p \colon D^2(s) \to D^2$, $p \in P$, for some fixed small $s > 0$. 

The smoothness of $f$ poses a restriction on the behavior of $f_p$ for a fixed point $p \in P$ because of the following reason: a $C^1$ map is injective in a neighborhood of a repelling fixed point. This fact follows from the inverse function theorem and the fact that the repelling condition implies that the eigenvalues of the jacobian matrix at the fixed point lie outside the unit disk and, in particular, away from zero. We shall prove later that if any $f_p$ is injective then the transversal degree satisfies $|d| \le 1$ and Theorem \ref{thm:intro} becomes trivial. Accordingly, we focus on the case $|d| > 1$ in which $f_p$ is not injective for any $p \in P$ and, in particular, the Jacobian matrix $A_p$ of $f_p$ at the origin in the 2--dimensional normal direction is singular. Therefore, there are only two dynamically different cases stated in terms of the spectral radius of $A_p$, either it is smaller than 1 and the origin is an attractor for $f_p$ or it is greater or equal than 1 and there is an attracting cone region.


We proceed now to study the dynamics of planar maps such as $f_p$. Later, we apply the local picture to describe the behavior of $f$ in the normal direction to $P$.

\subsection{Planar results}

Suppose $g \in \mathbb R^2 \to \mathbb R^2$ is a $C^1$ map that fixes the origin, $g(0) = 0$. Denote by $A$ the Jacobian matrix of $g$ at $0$. By the definition of differentiability at the origin, for every $\epsilon > 0$ there exists $\delta > 0$ such that

\begin{align}\label{eq:diffR2}
\frac{||g(u) - Au||}{||u||} < \epsilon, \quad \quad \text{for all}\enskip u \enskip \text{such that}\enskip 0 < ||u|| < \delta
\end{align}
\noindent where we have used the identification $T_0\mathbb R^2 \cong \mathbb R^2$ and $||\cdot||$ is a norm in $\mathbb R^2$. Recall that all the norms in a finite dimensional vector space are equivalent. The spectral radius of $A$, $\rho(A)$, largely determines the behavior of $g$ in a neighborhood of $0$.

\begin{lemma}
For every $c > \rho(A)$ there exists a norm $||\cdot||$ in $\mathbb R^2$ such that

\centerline{
$||Au|| < c\,||u||$ for every $u \in \mathbb R^2 - \{0\}$.
}
\end{lemma}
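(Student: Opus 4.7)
The plan is to construct an adapted norm $||\cdot||_*$ as a weighted geometric sum of the forward orbit of $u$ under $A$. Fix any auxiliary norm $||\cdot||_0$ on $\R^2$ (say, the Euclidean one) and set
\[
||u||_* := \sum_{n=0}^{\infty} c^{-n} \, ||A^n u||_0.
\]
The definition is engineered so that applying $A$ shifts the series by one index, producing a global factor of $c$ together with a strictly positive error term that forces the inequality to be strict.

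The first thing to verify is that $||\cdot||_*$ is well defined. By Gelfand's spectral radius formula, $\rho(A) = \lim_{n\to\infty} ||A^n||_0^{1/n}$, so since $c > \rho(A)$ one can fix $r$ with $\rho(A) < r < c$ and a constant $K > 0$ with $||A^n||_0 \le K r^n$ for every $n \ge 0$. Then $||u||_* \le K\,||u||_0 \sum_{n \ge 0}(r/c)^n < \infty$. Positive homogeneity and the triangle inequality are inherited termwise from $||\cdot||_0$, while positivity is immediate from $||u||_* \ge ||u||_0 > 0$ for $u \neq 0$.

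The main step is a telescoping computation:
\[
||Au||_* = \sum_{n=0}^{\infty} c^{-n}\, ||A^{n+1}u||_0 = c\sum_{n=1}^{\infty} c^{-n}\, ||A^n u||_0 = c\bigl(||u||_* - ||u||_0\bigr).
\]
Because $||u||_0 > 0$ whenever $u \neq 0$, rearranging gives $||Au||_* = c\,||u||_* - c\,||u||_0 < c\,||u||_*$, as desired.

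I do not foresee any serious obstacle; the only delicate point is the convergence of the defining series, which is precisely why the strict hypothesis $c > \rho(A)$ (and not merely $c \ge \rho(A)$) enters. A more pedestrian alternative would be to put $A$ in Jordan normal form over $\C$ and rescale the Jordan basis so that any off-diagonal $1$'s shrink to a size smaller than $c - \rho(A)$, after which the sum or max norm in the new basis does the job; but the series construction is cleaner, avoids case distinctions, and has the incidental virtue of being dimension-independent.
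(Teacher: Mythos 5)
Your proof is correct, but it follows a different route from the paper's. The paper works purely in dimension $2$ and splits into the three real Jordan-form cases: for $A$ diagonalizable over $\R$ it takes the $\ell^1$-norm in an eigenbasis, for complex eigenvalues the $\ell^2$-norm in a basis where $A$ acts as a rotation composed with a scaling, and for a non-diagonalizable repeated eigenvalue it rescales the basis to $\{Ke_0, e_1\}$ with $K$ large so that the off-diagonal contribution is pushed below $c - \rho(A)$. You instead build the adapted norm $||u||_* = \sum_{n\ge 0} c^{-n}||A^nu||_0$, using Gelfand's formula for convergence and a telescoping identity for the strict inequality; your verification of the norm axioms, of convergence, and of strictness (via the positive term $c\,||u||_0$) is complete. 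What your approach buys is uniformity: no case distinctions, and the argument works verbatim for any linear operator on $\R^m$, at the cost of invoking the spectral radius formula. What the paper's approach buys is explicitness: the concrete $\ell^1$/$\ell^2$ norms in eigenbases are exactly the ones reused later in Lemma 6 (where the cone $C(\alpha)$ is defined in terms of the coordinates of the eigenbasis $\{e_0,e_\lambda\}$), so the explicit construction is not mere pedantry but feeds into the rest of Section 4. The ``pedestrian alternative'' you sketch at the end is essentially the paper's proof, modulo working with the real rather than complex Jordan form.
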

\begin{proof}
If $A$ is diagonalizable over $\mathbb R$, we can take the $\ell^1$--norm associated to a basis $\mathcal B$ composed of eigenvectors, that is, $||u|| = ||(u_1,u_2)_{\mathcal B}|| := |u_1| + |u_2|$. If the eigenvalues of $A$ are not real, the $\ell^2$--norm associated to an orthogonal basis satisfies the conclusion. Finally, if the eigenvalues of $A$ are equal but $A$ is not diagonalizable, let $e_0$ be an eigenvector and $e_1 \neq 0$ not collinear to $e_0$. Then, we can take the $\ell^1$--norm associated to the basis $\{Ke_0, e_1\}$ for large enough $K > 0$.
\end{proof}

An immediate consequence of the previous lemma and \eqref{eq:diffR2} is that if $\rho(A) < 1$ then the origin is a local attractor for $g$.

\begin{corollary}\label{cor:spectrum<1attractor}
Suppose $\rho(A)< 1$ and let $\epsilon \in (0, 1-\rho(A))$, then there exists $\delta > 0$ and a norm in $\mathbb R^2$ such that $||g(u)|| < (1-\epsilon) ||u||$ whenever $0 < ||u|| < \delta$.
\end{corollary}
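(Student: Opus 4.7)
The plan is to combine the norm produced by the previous lemma with the first-order estimate \eqref{eq:diffR2} via the triangle inequality. Since $\epsilon \in (0, 1-\rho(A))$, we have $\rho(A) < 1-\epsilon$, so there is room to pick an intermediate constant $c$ with $\rho(A) < c < 1-\epsilon$. First I would invoke the preceding lemma with this value of $c$ to obtain a norm $\|\cdot\|$ on $\mathbb R^2$ such that $\|Au\| < c\,\|u\|$ for every $u \neq 0$.

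Next I would set $\epsilon' := 1-\epsilon-c > 0$. Applying \eqref{eq:diffR2} with $\epsilon'$ in place of $\epsilon$, using the chosen norm (legitimate since all norms on $\mathbb R^2$ are equivalent, so differentiability at $0$ with respect to the Euclidean norm yields the analogous estimate with constant adjusted, and then one just shrinks $\delta$), produces a $\delta > 0$ such that $\|g(u)-Au\| < \epsilon'\,\|u\|$ whenever $0 < \|u\| < \delta$.

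Finally I would close the argument by the triangle inequality:
\[
\|g(u)\| \le \|Au\| + \|g(u)-Au\| < c\,\|u\| + \epsilon'\,\|u\| = (1-\epsilon)\,\|u\|,
\]
which is the desired bound on the punctured ball $0 < \|u\| < \delta$.

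There is essentially no obstacle here; the only point worth being careful about is that \eqref{eq:diffR2} is stated for an unspecified norm, so one must observe that the first-order Taylor estimate is a norm-independent property (by equivalence of norms on a finite-dimensional space, after possibly shrinking $\delta$). Everything else is a one-line combination of the previous lemma and the definition of differentiability.
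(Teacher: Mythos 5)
Your proof is correct and is exactly the argument the paper intends: the corollary is stated as an immediate consequence of the preceding lemma and \eqref{eq:diffR2}, and your choice of an intermediate constant $c$ with $\rho(A)<c<1-\epsilon$ followed by the triangle inequality is the standard way to fill that in. The remark about norm-independence of the differentiability estimate (up to shrinking $\delta$) is a sensible, correct touch.
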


In the case there is an eigenvalue $\lambda$ with $|\lambda|\ge 1$, we can locate the region where the inequality $||g(u)|| < ||u||$ does not hold.

\begin{lemma}\label{lem:sectorplano}
Suppose the eigenvalues of $A$ are $\{0, \lambda\}$ with $|\lambda| \ge 1$. Denote $\mathcal B = \{e_0, e_{\lambda}\}$ a basis composed of eigenvectors, $||\cdot||$ the $\ell^1$--norm associated to $\mathcal B$ and 

\[
C(\alpha) = \{u = (u_0, u_{\lambda})_{\mathcal B} \in \mathbb R^2-\{0\}: |u_0/u_{\lambda}| <\alpha\}.
\]
\noindent
For every $\epsilon\in (0,1/2)$ there exists $\delta > 0$ such that if $0 < ||u|| < \delta$ and
\begin{itemize}
\item[(i)] if $u \notin C\left(\frac{|\lambda|+2\epsilon-1}{1-2\epsilon}\right)$ then $||g(u)|| < (1-\epsilon)||u||$.
\item[(ii)] if $u \in C\left( \frac{|\lambda|-3\epsilon}{3\epsilon} \right)$ then $|(g(u))_{\lambda}| > \epsilon||u||$ and, in particular, $g(u) \notin \langle e_0 \rangle$. ($\langle v \rangle$ denotes the subspace spanned by $v$)
\end{itemize}
\end{lemma}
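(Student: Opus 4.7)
The plan is to turn the definition of differentiability \eqref{eq:diffR2} directly into the two inequalities by choosing the accuracy parameter to be $\epsilon/2$. Taking the $\ell^1$--norm associated with $\mathcal B$, I would obtain $\delta > 0$ such that $\|g(u) - Au\| < (\epsilon/2)\|u\|$ for $0 < \|u\| < \delta$. Writing $u = u_0 e_0 + u_\lambda e_\lambda$ and $g(u) = g_0(u) e_0 + g_\lambda(u) e_\lambda$ in the basis $\mathcal B$, and using $Ae_0 = 0$, $Ae_\lambda = \lambda e_\lambda$, the $\ell^1$--norm of the error splits as $|g_0(u)| + |g_\lambda(u) - \lambda u_\lambda|$, so that both coordinate errors are individually controlled:
\[
|g_0(u)| < (\epsilon/2)\|u\|, \qquad |g_\lambda(u) - \lambda u_\lambda| < (\epsilon/2)\|u\|.
\]

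For (i), I would combine the first bound with the triangle estimate $|g_\lambda(u)| \le |\lambda|\,|u_\lambda| + (\epsilon/2)\|u\|$ to obtain $\|g(u)\| \le |\lambda|\,|u_\lambda| + \epsilon\|u\|$. When $u \notin C(\alpha_1)$ with $\alpha_1 = (|\lambda|+2\epsilon-1)/(1-2\epsilon)$, the definition of the cone gives $|u_0| \ge \alpha_1|u_\lambda|$, whence $|u_\lambda| \le \|u\|/(\alpha_1+1)$. A short calculation yields $\alpha_1 + 1 = |\lambda|/(1-2\epsilon)$, so $|\lambda|\,|u_\lambda| \le (1-2\epsilon)\|u\|$ and therefore $\|g(u)\| < (1-\epsilon)\|u\|$, as required.

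For (ii), I would apply the reverse triangle inequality to get $|g_\lambda(u)| \ge |\lambda|\,|u_\lambda| - (\epsilon/2)\|u\|$. If $u \in C(\alpha_2)$ with $\alpha_2 = (|\lambda|-3\epsilon)/(3\epsilon)$, then $|u_0| < \alpha_2|u_\lambda|$ forces $\|u\| < (\alpha_2+1)|u_\lambda| = \frac{|\lambda|}{3\epsilon}|u_\lambda|$, that is, $|u_\lambda| > \frac{3\epsilon}{|\lambda|}\|u\|$. Substituting, $|g_\lambda(u)| > 3\epsilon\|u\| - (\epsilon/2)\|u\| = (5\epsilon/2)\|u\| > \epsilon\|u\|$, which is the claimed bound; since the $e_\lambda$--coordinate of $g(u)$ is nonzero, we also have $g(u) \notin \langle e_0 \rangle$.

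The argument is really a single $\epsilon$--$\delta$ computation in coordinates adapted to $A$, so there is no serious obstacle. The only delicate point is calibrating the accuracy fed into \eqref{eq:diffR2}: the factors $2\epsilon$ and $3\epsilon$ appearing inside $\alpha_1$ and $\alpha_2$ are precisely the margin needed to absorb the error term $(\epsilon/2)\|u\|$ and still leave the strict inequalities $(1-\epsilon)$ and $\epsilon$ on the right--hand sides. A minor sanity check is that when $|\lambda| = 1$ and $\epsilon \ge 1/3$ the cone in (ii) degenerates (one has $\alpha_2 \le 0$, so $C(\alpha_2) = \emptyset$) and (ii) holds vacuously, while $\alpha_1 > 0$ always, so (i) is never vacuous.
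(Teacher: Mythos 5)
Your proof is correct and follows essentially the same route as the paper's: a direct application of the differentiability estimate \eqref{eq:diffR2} in the $\ell^1$--norm of the eigenbasis, with the cone parameters $\alpha_1,\alpha_2$ calibrated so that $\|Au\|=|\lambda|\,|u_\lambda|$ plus the error term stays below the required thresholds. The only cosmetic difference is that you feed accuracy $\epsilon/2$ into \eqref{eq:diffR2} and split the error coordinate-wise, whereas the paper uses accuracy $\epsilon$ and manipulates the full norm; both yield the same inequalities.
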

\begin{proof}
Apply \eqref{eq:diffR2} to $||\cdot||$ and $\epsilon$. For (i), if $u = (u_0, u_{\lambda})_{\mathcal B}$ with $||u|| < \delta$

\[
(1 - \epsilon)||u|| = (1-\epsilon)|u_0|+(1-\epsilon)|u_{\lambda}| \ge \epsilon |u_0| + (|\lambda| + \epsilon) |u_{\lambda}| = ||Au|| + \epsilon ||u|| > ||g(u)||
\]

\noindent
To deduce (ii) we use that $||g(u)||+\epsilon||u|| \ge ||Au||$:

\[
|(g(u))_{\lambda}| = ||g(u)||-|(g(u))_0| \ge |\lambda||u_{\lambda}| - 2\epsilon ||u|| = (|\lambda|- 2\epsilon)|u_{\lambda}| - 2\epsilon|u_0| > \epsilon||u|| > 0.
\]

\end{proof}

\subsection{Analysis in the normal direction}

Since $f(P) = P$, $f$ restricts to a continuous map

\centerline{$f \colon D^2(s) \times P \longrightarrow D^2 \times P$}
\noindent for some $s > 0$, where we extensively use the coordinates introduced in \eqref{eq:coordinatesS-P'}. The Jacobian matrix of $f$ in a point $p \in P$ has the following form:

\centerline{
$Jf_p = \begin{pmatrix}
A_p & 0 \\
\ast & (Jf_{|P})_p \\
\end{pmatrix}$
}
\noindent
where the basis for the tangent space at $p$ is ordered according to the local product structure around $P$: first the (2-dimensional) normal space to $P$ and then the ($(m-2)$--dimensional) tangent space to $P$. Alternatively, $A_p$ can be defined as the Jacobian at $0$ of the following composition

\begin{align}\label{def:perp}
f_p \colon D^2(s) \to D^2(s) \times \{p\} \hookrightarrow D^2(s) \times P  \xrightarrow{\phantom{a}f\phantom{a}} D^2 \times P \xrightarrow{\mathrm{proj}} D^2
\end{align}

\begin{lemma}
If $|d| > 1$ then $A_p$ is singular for every $p \in P$.
\end{lemma}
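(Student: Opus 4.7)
The plan is to prove the contrapositive: if $A_p$ is nonsingular for some $p \in P$, then $|d| \le 1$. The idea is that an invertible $A_p$ makes $f_p$ a local $C^1$ diffeomorphism by the inverse function theorem, which forces the generator of $\pi_1(S-P)$ to be mapped to a loop winding only $\pm 1$ times around $P$.

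First, since $A_p$ is invertible and $f_p$ is $C^1$ with $f_p(0)=0$, the inverse function theorem yields $\delta > 0$ such that $f_p|_{D^2(\delta)}$ is a $C^1$ diffeomorphism onto its image; in particular $f_p(z) \ne 0$ for $0 < \|z\| < \delta$. Combining the differentiability estimate $\|f_p(z) - A_p z\| < \epsilon \|z\|$ with the linear lower bound $\|A_p z\| \ge c \|z\|$ (available because $A_p$ is invertible), the straight-line homotopy $(s,z) \mapsto (1-s) f_p(z) + s A_p z$ avoids $0$ on $\partial D^2(r)$ for all sufficiently small $r$. Hence $f_p|_{\partial D^2(r)}$ is homotopic in $D^2 \setminus \{0\}$ to the restriction of the linear isomorphism $A_p$, whose winding number around $0$ is $\sgn(\det A_p) = \pm 1$.

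Finally, I would identify this planar winding with the transversal degree $d$. In the coordinates of \eqref{eq:coordinatesS-P'}, the loop $\gamma(t) = (r e^{2\pi i t}, p)$ is the distinguished generator of $\pi_1(S-P)$ recalled in Section \ref{sec:prelim}, and $f \circ \gamma$ has $D^2$-component $t \mapsto f_p(r e^{2\pi i t})$. On the other hand, via \eqref{eq:coordinatesS-P}, the homotopy class of a loop in $\pi_1(S-P) \cong \Z$ is read off from the winding of its $(x_0, x_1)$-component around $0$, which on the slice $D^2 \times \{p\}$ is exactly the planar winding. Thus $[f \circ \gamma] = \pm 1$ in $\pi_1(S-P)$, and by \eqref{eq:fundamentalgroup} this equals $d$, so $|d| = 1$. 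The main subtlety lies in this last compatibility between the two coordinate systems, ensuring that planar windings on a normal slice genuinely compute the transversal degree; once this is verified, the rest of the argument reduces to the local linear picture at the fixed point.
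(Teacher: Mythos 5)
Your proposal is correct and follows essentially the same route as the paper: invertibility of $A_p$ plus the inverse function theorem makes $f_p$ a local diffeomorphism at $0$, so the image of the small generating loop of $\pi_1(S-P)$ winds exactly $\pm 1$ times around $P$, contradicting $|d|>1$. The only (harmless) difference is that you compute the winding number explicitly by a straight-line homotopy to the linear part $A_p$, whereas the paper argues that $f_p(\gamma)$ generates $\pi_1(f_p(D^2(r))-\{0\})$; your handling of the compatibility between the charts \eqref{eq:coordinatesS-P} and \eqref{eq:coordinatesS-P'} is sound and, if anything, slightly more explicit than the paper's.
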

\begin{proof}
Suppose that $A_p$ is regular. Then, $f_p$ restricts to a diffeomorphism between $D^2(r)$ and $V = f_p(D^2(r))$, for small $r > 0$. In particular, if $\gamma$ is a generator of $\pi_1(D^2(r) - \{0\})$, then $f_p(\gamma)$ is a generator of $\pi_1(V - \{0\})$.
Think of $\gamma$ as a loop in $(D^2(r)-\{0\}) \times\{p\}$ and choose it small so that $f(\gamma) \subset D^2 \times U_{f(p)}$, where $U_{f(p)}$ is contractible in $P$. It follows that both $\gamma$ and $f(\gamma)$ generate $\pi_1(S-P)$ and, by \eqref{eq:fundamentalgroup}, we deduce that $d = \pm 1$, a contradiction.
\end{proof}

Therefore, either Corollary \ref{cor:spectrum<1attractor} or Lemma \ref{lem:sectorplano} apply to $f_p$ when $|d|>1$. For a given $p \in P$, we extend the results by continuity to $f_q$ for $q$ close to $p$. 

\begin{proposition}\label{prop:resumen} Suppose that $|d|>1$.
For every $p \in P$, there exists a neighborhood $D^2(\delta) \times U_p$ of $p$ in $S$ and a norm $||\cdot||$ in $\mathbb R^2$ such that:

\begin{itemize}
\item[(i)] If $\rho(A_p) < 1$, for all $q \in U_p$ and all $u \in D^2(\delta)$,

\centerline{ $||f_q(u)|| \le ||u||$.}

\item[(ii)] Otherwise, there is a basis $\{e_0, e_1\}$ of $\mathbb R^2$ and $\alpha \in \mathbb R^+$ such that for every $q \in U_p$
\begin{itemize}
\item if $u \notin C(\alpha)$ and $u \in D^2(\delta)$ then $||f_q(u)|| \le ||u||$.
\item $f_q(C(\alpha) \cap D^2(\delta)) \cap \langle e_0 \rangle = \emptyset$. \end{itemize}

\end{itemize}

\end{proposition}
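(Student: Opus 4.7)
The conclusion of Proposition \ref{prop:resumen} is that the planar estimates for $f_p$ coming from Corollary \ref{cor:spectrum<1attractor} (case (i)) and Lemma \ref{lem:sectorplano} (case (ii)) persist, with a slight loss in the constants, for $f_q$ as $q$ varies in a neighborhood of $p$ in $P$. My plan has three steps: first, obtain the pointwise estimates for $f_p$ with a slack factor; second, bound $\|f_q - f_p\|$ linearly in $\|u\|$, uniformly in $q$ near $p$; third, combine.

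For the first step, case (i) is immediate: apply Corollary \ref{cor:spectrum<1attractor} with $\epsilon_0 \in (0, 1-\rho(A_p))$ to obtain a norm on $\mathbb R^2$ and some $\delta_0 > 0$ with $\|f_p(u)\| \le (1-\epsilon_0)\|u\|$ on $D^2(\delta_0)$. In case (ii), the preceding lemma (using $|d|>1$) forces $A_p$ to be singular, so its spectrum is $\{0, \lambda\}$ with $|\lambda| = \rho(A_p) \ge 1$. I then choose $\epsilon_0 \in (0,1/2)$ small enough that
\[
\frac{|\lambda|+2\epsilon_0-1}{1-2\epsilon_0} \;<\; \frac{|\lambda|-3\epsilon_0}{3\epsilon_0}
\]
(possible because the right-hand side diverges while the left-hand side stays bounded as $\epsilon_0 \to 0$) and fix any $\alpha$ strictly between these two quantities. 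Lemma \ref{lem:sectorplano} then supplies a basis $\{e_0, e_\lambda\}$, the associated $\ell^1$--norm and some $\delta_0 > 0$ such that on $D^2(\delta_0)$
\[
\|f_p(u)\| \le (1-\epsilon_0)\|u\| \ \text{ for } u \notin C(\alpha), \qquad |(f_p(u))_\lambda| \ge \epsilon_0\|u\| \ \text{ for } u \in C(\alpha).
\]

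The second step is where the $C^1$ hypothesis on $f$ enters. Joint continuity of $(q, v) \mapsto Df_q(v)$ on a compact neighborhood of $(p, 0)$ in $P \times \overline{D}^2(\delta_0)$, together with the Taylor identity
\[
f_q(u) - A_q u \;=\; \int_0^1 \bigl(Df_q(tu) - Df_q(0)\bigr)\, u \, dt,
\]
yields $\|f_q(u) - A_q u\| \le m(\|u\|)\,\|u\|$ with $m(r) \to 0$ as $r \to 0$, uniformly for $q$ in some compact neighborhood of $p$. Combined with $A_q \to A_p$ as $q \to p$, this allows me to arrange, for any prescribed $\eta > 0$, a neighborhood $U_p$ and $\delta \in (0, \delta_0]$ with $\|f_q(u) - f_p(u)\| \le \eta\|u\|$ on $D^2(\delta)$ for all $q \in U_p$.

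Applying this with $\eta = \epsilon_0/2$ finishes the proof: in case (i), $\|f_q(u)\| \le (1-\epsilon_0/2)\|u\| \le \|u\|$; in case (ii) the same bound holds outside $C(\alpha)$, while inside $C(\alpha)$ we have $|(f_q(u))_\lambda| \ge |(f_p(u))_\lambda| - \eta\|u\| \ge (\epsilon_0/2)\|u\| > 0$ for $u \ne 0$, so $f_q(u) \notin \langle e_0\rangle$. The only (mild) obstacle is the uniformity in $q$ of the Taylor remainder in the second step; a bound that were only pointwise in $q$ would be useless, since both the norm estimate and the cone condition must hold simultaneously for all small $u$ and all $q$ in one fixed neighborhood.
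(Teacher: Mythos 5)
Your proof is correct and follows essentially the same route as the paper: apply Corollary \ref{cor:spectrum<1attractor} or Lemma \ref{lem:sectorplano} at $p$ with a slack $\epsilon$, then use the $C^1$ hypothesis to get a uniform bound $\|f_q(u)-f_p(u)\|\le \eta\|u\|$ for $q$ near $p$ and absorb the error into the slack. The only (immaterial) differences are that you route the perturbation estimate through $A_q$ rather than comparing $Df_q$ and $Df_p$ directly, and you place $\alpha$ strictly between the two cone parameters where the paper takes the left endpoint.
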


\begin{proof}
Let $||\cdot||$ be the norm from Corollary \ref{cor:spectrum<1attractor} or Lemma \ref{lem:sectorplano} depending on which alternative, (i) or (ii), applies.

For (i), apply Corollary \ref{cor:spectrum<1attractor} to any $\epsilon \in (0, 1-\rho(A))$ to obtain $||\cdot||$ and $\delta$ such that $||f_p(u)|| \le (1-\epsilon)||u||$ holds whenever $||u||<\delta$. In order to extend the conclusion to $f_q$, for $q$ close to $p$, we use the smoothness of $f$.

Since $f_q(u) = \left(\int_0^1 Df_q|_{tu}\, dt\right) u$, we have that

\begin{equation}\label{eq:acotacionC1}
||f_q(u) - f_p(u)|| \le \left( \int_0^1 ||Df_q|_{tu} - Df_p|_{tu}||\,dt\right) ||u|| < \gamma_q||u||
\end{equation}

\noindent
where $\gamma_q = \mathrm{max}_{v \in D^2(\delta)}||Df_q|_v - Df_p|_v||$. Since $f$ is $C^1$, $\gamma_q \to 0$ as $q \to p$. Let $U_p$ be a neighborhood of $p$ in $P$ such that $|\gamma_q| < \epsilon$ for all $q \in U_p$. Then we conclude that $||f_q(u)|| \le ||u||$ for all $q \in U_p$ and $||u|| < \delta$.

For (ii), denote $\lambda$ be the non--zero eigenvalue of $A_p$. Firstly, take $\epsilon > 0$ small enough so that

\centerline{
$\frac{|\lambda|+2\epsilon-1}{1-2\epsilon} < \frac{|\lambda|-3\epsilon}{3\epsilon} $
}

\noindent
and set $\alpha = \frac{|\lambda|+2\epsilon-1}{1-2\epsilon}$. Apply Lemma \ref{lem:sectorplano} to obtain $||\cdot||$, $\delta$ and $\{e_0, e_1\}$. The conclusions for $q = p$ follow immediately from the lemma. To extend the results to a neighborhood $U_p$ of $p$ we use \eqref{eq:acotacionC1} and proceed exactly as in (i). The argument above can be used verbatim to prove the first item. For the second item, $|(f_q(u))_{\lambda}| \ge |(f_p(u))_{\lambda}| - \epsilon ||u|| > 0$.
Finally, note that the norm from Lemma \ref{lem:sectorplano} may not be the $\ell^2$-norm so we might need to shrink $\delta$ to $\delta'$ so that the standard 2-disk $D^2(\delta')$ fits inside the disk defined by $||u||<\delta$.
\end{proof}

Below, in the proof of Theorem \ref{thm:fixedpointslift}, only the fixed points $p$ of $P$ for which the second alternative in Proposition \ref{prop:resumen} applies require special attention. The local description obtained above provides a cone $C(\alpha)$ above each $q$ close to $p$ that contains the repelling sector (if any) and whose image misses the attracting direction spanned by $e_0$.

\section{Proof of Theorem \ref{thm:fixedpointslift}}

Recall (see \eqref{eq:coordinatesS-P}) that $S - P$ is diffeomorphic to $S^1 \times D^{m-1}$. Consider the cover

\begin{equation}\label{eq:liftS-P}
\widetilde{S - P} \cong \mathbb R \times D^{m-1} \longrightarrow S^1 \times D^{m-1} \cong S-P
\end{equation}

\noindent
defined as the standard cover $\mathbb R \to S^1$, $t \mapsto e^{2\pi i t}$ in the first factor and as the identity in the second factor.

Our aim is to prove that except for a few cases, every lift of $f$ to $\widetilde{S-P}$ has a fixed point in $B_M = [-M, M] \times D^{m-1}(1-\delta)$ for large $M$ and small $\delta > 0$. The argument is based on topological degree theory.
The key observation is that, for most of the lifts $F$, the vector field $v_F(x) = F(x)-x$ in $\mathbb R \times D^{m-1}$ never points to the same direction as the coordinate vector field $\partial/\partial r(x)$, whose definition will be recalled next, on the boundary of $\mathbb R \times D^{m-1}(1-\delta)$. Then, we can apply Lemmas \ref{lem:degreevectorfields} and \ref{lem:degreemap} to conclude that $F$ has a fixed point inside $B_M$ for large $M > 0$.

From \eqref{eq:coordinatesS-P'}, we deduce that $S-P-P'$ is diffeomorphic to $D^2_0 \times P$, where the subscript in $D_0^2$ indicates that the disk is punctured at the origin. The lift of $S-P-P'$ to the cover \eqref{eq:liftS-P} is therefore diffeomorphic to $\mathbb R \times (0,1) \times P$, where the second factor, $(0,1)$, corresponds to the radial coordinate $r$ of $D^2_0$ and also to $1-r'$, where $r'$ is the radial coordinate of $D^{m-1}$ in \eqref{eq:liftS-P} (cf. the discussion before \eqref{eq:decompositionS}). The coordinate $r$ and, more precisely, the vector field $\partial/\partial r$ that it defines in the cover play a central role in the discussion. On the lateral face of the cylinder $B_M$, $\partial/\partial r$ points inwards. Note that, alternatively, if we use the coordinates from \eqref{eq:coordinatesS-P} instead of those of \eqref{eq:coordinatesS-P'}, we see that the lift of $S-P-P'$ is diffeomorphic to $\mathbb R \times D^{m-1}_0$.


The lift of the partition \eqref{eq:decompositionS} of $S-P$ for $r = \delta$ still displays a product structure:

\centerline{
$\widetilde{S - P} = \mathbb R \times (0,\delta) \times P \sqcup \mathbb R \times D^{m-1}(1-\delta)$
}
\noindent
The factor $\mathbb R$ corresponds to the angular coordinate in the normal bundle of $P$ in the first term and to the lift of $S^1$ (that parametrizes $P'$) in the second instance. 
The projection onto the first factor $\mathrm{pr} \colon \widetilde{S-P} \to \mathbb R$ conjugates a generator $\tau$ of the group of deck transformations of the cover and the translation by 1 in $\mathbb R$. 

Recall from the statement of Theorem \ref{thm:fixedpointslift} that $d \neq 0, 1$.
For an arbitrary lift $F$ of $f$, by \eqref{eq:liftdegree} we have that $F \tau = \tau^d F$ so

\centerline{
$\mathrm{pr}(x) = \mathrm{pr}(y) + 1 \quad \Rightarrow \quad \mathrm{pr}(F(x)) = \mathrm{pr}(F(y)) + d$.
}

\noindent
for all $x, y \in \widetilde{S-P}$. Therefore, for a fixed $\delta > 0$, if $M$ is sufficiently large and if $d > 1$ then

\begin{align}\label{eq:tapas}
\begin{split}
\mathrm{pr}(F(x)) &\le -M - 1 \quad \text{for every } x \in \{-M\} \times D^{m-1}(1 - \delta) \text{ and} \\
\mathrm{pr}(F(x)) &\ge M + 1 \quad \text{for every } x \in \{M\} \times D^{m-1}(1 - \delta)
\end{split}
\end{align}

\noindent
whereas if $d \le -1$ then

\begin{align}\label{eq:tapasdnegativo}
\begin{split}
\mathrm{pr}(F(x)) &\ge -M + 1 \quad \text{for every } x \in \{-M\} \times D^{m-1}(1 - \delta) \text{ and} \\
\mathrm{pr}(F(x)) &\le M - 1 \quad \text{for every } x \in \{M\} \times D^{m-1}(1 - \delta)
\end{split}
\end{align}

These inequalities imply that on the left and right faces (as in Figure \ref{fig:cilindro}) of the solid cylinder $B_M$ the vector field $v_F$ points outwards when $d > 1$ and inwards when $d \le -1$. 

Let us focus now on the lateral face of $B_M$ and suppose further that $d \neq -1$.
Let $p \in \mathrm{Fix}(f|_P)$ and consider the neighborhood $D^2(\delta) \times U_p$ of $p$ and the norm $|| \cdot ||$ from Proposition \ref{prop:resumen}. Denote $V_p$ the lift of $D^2_0(\delta) \times U_p$ to the cover (note that the disk is punctured) and suppose $v_F$ and $\partial/\partial r$ point to the same direction at $x \in V_p$. This implies that the projection of $x$ and $F(x)$ to $D^2 \times P$, have the form $(u, q)$ and $(au, q)$ for some $a > 1$ and $q \in U_p$. In particular, $f_q(u) = au$, which automatically implies $||f_q(u)|| > ||u||$. Thus, the second alternative of Proposition \ref{prop:resumen} applies to $p$, so there exists a basis $\{e_0, e_1\}$ and $\alpha \in \mathbb R^+$ such that $u \in C(\alpha) \cap D^2(\delta)$ and $f_q(C(\alpha)\cap D^2(\delta)) \cap \langle e_0\rangle = \emptyset$ for every $q \in U_p$. The last property can be stated equivalently as

\begin{equation}\label{eq:sectorimage}
f((C(\alpha) \cap D^2(\delta)) \times U_p) \cap (\langle e_0 \rangle \times U_p) = \emptyset
\end{equation}

We now lift these elements to the cover. The cone region $C(\alpha) \times U_p$ lifts to a sequence of domains $O_n = (I + \frac{n}{2}) \times (0,1) \times U_p$, indexed by $n \in \mathbb Z$, where we are now employing the coordinates of $\mathbb R \times (0,1) \times P$, the lift of $D_0^2 \times P$, and $I$ is an interval in $\mathbb R$ of length $< 1/2$. See Figure \ref{fig:cilindro}. Similarly, the strip $\langle e_0 \rangle \times U_p$ lifts to a sequence of strips $E_n = \{\widetilde e_0 + \frac{n}{2}\} \times (0,1) \times U_p$ for some $\widetilde e_0 \in \mathbb R$. Restricted to $V_p$, the strips $E_n$ and the domains $O_n$ are pairwise disjoint and are placed alternately. Evidently, $\tau(E_n) = E_{n+2}$ and $\tau(O_n) = O_{n+2}$.

Denote $O_n^{\delta} = O_n \cap (\mathbb R \times (0, \delta) \times U_p)$, the subset of $O_n$ defined by $0 < r < \delta$. The condition \eqref{eq:sectorimage} implies that $F(O_n^{\delta})$ does not meet $E_m$ for any $m \in \mathbb Z$. This imposes a serious restriction on the number of lifts for which the image of $O_n^{\delta}$ intersects itself.

\begin{lemma}
There are at most two elements $G$ of $\{F, \tau F, \ldots, \tau^{|d-1|-1} F\}$ such that

\centerline{
$G(O_n^{\delta}) \cap O_n \neq \emptyset$ \enskip for some $n \in \mathbb Z$.
}
\end{lemma}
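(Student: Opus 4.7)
The plan is to show that for each fixed $n$ the image $F(O_n^{\delta})$ meets at most one of the $O_M$'s, and then to translate this into a congruence modulo $|d-1|$ that admits at most two values of $k$.

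First I would note that, up to shrinking $\delta$ and $U_p$ consistently with the conclusions of Proposition~\ref{prop:resumen}, the image $F(O_n^{\delta})$ is contained in $\Omega := \mathbb R \times (0,1) \times U_p$, the lift of $D^2_0 \times U_p$ to the cover. Indeed, $f(p) = p$ and continuity of $f$ guarantee that a sufficiently small neighborhood of $P$ around $p$ is sent into a neighborhood of $p$ that stays in the $U_p$-direction and away from $P'$. Each $O_n^{\delta}$ is a product of intervals and the (WLOG connected) neighborhood $U_p$, so it is connected, and therefore so is $F(O_n^{\delta})$. Within $\Omega$, the strips $E_m$ are codimension-one walls and, by the alternating placement of the $E_m$ and the $O_m$ along the angular coordinate, the connected components of $\Omega \setminus \bigcup_m E_m$ are open angular slabs of width $1/2$, each containing exactly one $O_m$. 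Since $F(O_n^{\delta})$ is connected and disjoint from every $E_m$ by \eqref{eq:sectorimage}, it must lie in a single such slab and meets at most one $O_M$; denote this index by $M(n)$ when the intersection is non-empty.

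The equivariance $F \tau = \tau^d F$ yields $F(O_{n+2}^{\delta}) = \tau^d F(O_n^{\delta})$, giving the recursion $M(n+2) = M(n) + 2d$, hence $M(2j) = M(0) + 2jd$ and $M(2j+1) = M(1) + 2jd$ whenever defined. The condition that $G = \tau^k F$ satisfies $G(O_n^{\delta}) \cap O_n \neq \emptyset$ for some $n$ is equivalent to $F(O_n^{\delta}) \cap O_{n - 2k} \neq \emptyset$, i.e. $M(n) = n - 2k$. Restricted to even $n = 2j$, this gives a congruence $2k \equiv -M(0) \pmod{2|d-1|}$, which has at most one solution in $\{0, \ldots, |d-1|-1\}$; restricted to odd $n$, it gives an analogous congruence involving $M(1)$ with at most one solution. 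Hence at most two bad lifts.

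I expect the main obstacle to be the first step, namely arranging simultaneously for all $n$ that $F(O_n^{\delta}) \subset \Omega$ without losing the local picture of Proposition~\ref{prop:resumen}; once that containment is secured, the rest of the argument is a clean connectedness-plus-equivariance bookkeeping.
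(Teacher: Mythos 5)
Your proof is correct and follows essentially the same route as the paper's: the image of $O_n^{\delta}$ avoids every wall $E_m$ by \eqref{eq:sectorimage} and therefore meets at most one $O_M$, equivariance $F\tau = \tau^d F$ turns this into a congruence modulo $|d-1|$, and the two parities of $n$ account for the at most two bad lifts. Your explicit care about the containment $F(O_n^{\delta}) \subset \Omega$ --- needed so that connectedness plus avoidance of the $E_m$ really confines the image to a single slab, since the complement of $\bigcup_m E_m$ in the whole cover is connected --- addresses a point the paper leaves implicit.
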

\begin{proof}
Recall that for any lift $G$, $G (O_{n+2}) = G \tau(O_n) = \tau^d G(O_n)$ so if $G(O_n^{\delta})$ lies in between $E_k$ and $E_{k+1}$ then $G(O_{n+2}^{\delta})$ lies in between $E_{k+2d}$ and $E_{k+2d+1}$. Suppose that $G, G' = \tau^m G$ are two lifts of $f$ such that $G(O_n^{\delta}) \cap O_n \neq \emptyset$ and $G'(O_{n+2s}^{\delta}) \cap O_{n+2s} \neq \emptyset$ for some $s \neq 0$. It follows that $n + 2s = m + n + 2sd$, so $|d-1|$ divides $m$.

In sum, there is at most one lift in $\{F, \tau F, \ldots, \tau^{|d-1|-1} F\}$ such that the intersection in the statement is non-empty for some even $n$ and at most one lift such that the intersection is non-empty for some odd $n$.
\end{proof}

\begin{figure}
\includegraphics[scale=0.9]{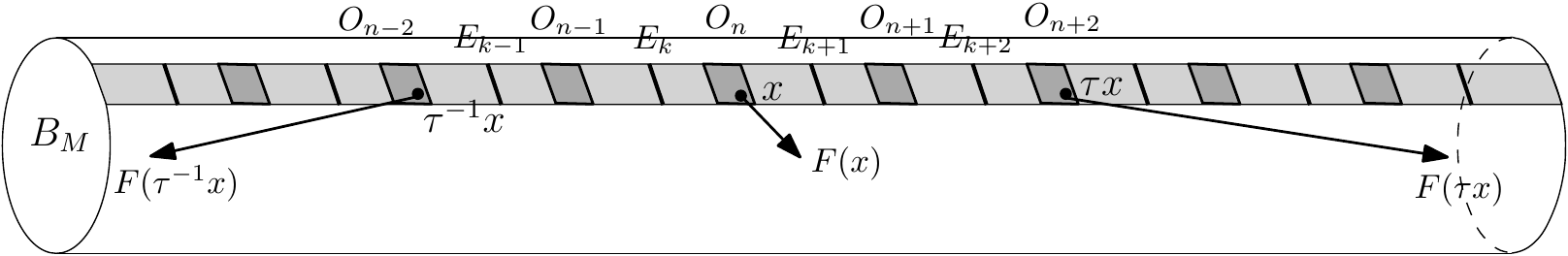}
\caption{
Picture of a piece of $B_M$ for $d = 2$. The darker pieces and thicker segments in the horizontal strip are the intersection of the domains $O_n$ and the strips $E_n$ with the lateral face of $B_M$, defined by $r = \delta$. Arrows illustrate $v_F$ at $\tau^{-1} x, x, \tau x$.
}
\label{fig:cilindro}
\end{figure}

Incidentally, note that the previous lemma trivially holds when $d = -1$. As a consequence of the discussion above we deduce:

\begin{lemma}\label{lem:entornopuntofijo}
Let $p \in \mathrm{Fix}(f_{|P})$ and $d \neq 0,1$. There exists $\delta_p > 0$, $U_p$ neighborhood of $p$ in $P$ such that there are at most two lifts $G$ among $\{F, \tau F, \ldots, \tau^{|d-1|-1} F\}$ for which the vector field $v_G(x) = G(x)-x$ points to the same direction as $\partial/\partial r$ at some point of $V_p$.
\end{lemma}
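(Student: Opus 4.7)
The plan is to reduce this lemma to the previous one by translating the hypothesis ``$v_G$ and $\partial/\partial r$ point in the same direction at some $x\in V_p$'' into the condition ``$G(O_n^\delta)\cap O_n\ne\emptyset$ for some $n\in\mathbb Z$''. To this end, I would take $\delta_p$, the neighborhood $U_p\subset P$, and the norm $||\cdot||$ on $\mathbb R^2$ to be exactly those furnished by Proposition \ref{prop:resumen} at $p$, and let $V_p$ denote the lift of $D_0^2(\delta_p)\times U_p$ to the cover.

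Next, I would unpack the hypothesis. Working in the product coordinates $\mathbb R\times(0,1)\times P$ on the lift of $S-P-P'$ coming from \eqref{eq:coordinatesS-P'}, the condition $v_G(x)=\lambda\cdot\partial/\partial r|_x$ with $\lambda>0$ forces the angular and $P$--coordinates of $x$ and $G(x)$ to coincide. Projecting back to $D_0^2\times P$, this says that $x$ and $G(x)$ project to $(u,q)$ and $(au,q)$ for some $a>1$, so that $f_q(u)=au$ and in particular $||f_q(u)||=a\,||u||>||u||$.

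Then I would split into the two alternatives of Proposition \ref{prop:resumen}. In case (i), $||f_q(u)||\le||u||$ holds universally on $D^2(\delta_p)\times U_p$, so the strict inequality above is impossible and no lift at all satisfies the hypothesis (trivially at most two). In case (ii), the same inequality rules out $u\notin C(\alpha)$, leaving $u\in C(\alpha)\cap D^2(\delta_p)$, so $x$ belongs to $O_n^\delta$ for some $n$. Since $C(\alpha)$ is invariant under multiplication by positive scalars, $au\in C(\alpha)$, and because $x$ and $G(x)$ share angular and $P$--coordinates in the cover, $G(x)$ lies in the very same component $O_n$ as $x$ rather than in a deck-translate. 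Thus $G(O_n^\delta)\cap O_n\ne\emptyset$, and the previous lemma directly yields the bound of at most two such lifts in $\{F,\tau F,\ldots,\tau^{|d-1|-1}F\}$.

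The delicate point is this last step: one must verify that the lifted angular coordinates of $x$ and $G(x)$ in $\mathbb R$ (and not merely in $S^1$) coincide, so that $G(x)$ lies in the same component $O_n$ as $x$ and not in some deck-translate $O_{n+2k}$ with $k\ne 0$. This is exactly what ``points in the same direction as $\partial/\partial r$'' guarantees, as opposed to the weaker condition of collinearity: the vector $G(x)-x$ has vanishing angular component in $\mathbb R$, so the $\mathbb R$--coordinates of $G(x)$ and $x$ agree as real numbers, not merely modulo the deck group.
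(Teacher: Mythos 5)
Your argument is correct and is essentially the paper's own: the paper derives this lemma as "a consequence of the discussion above," namely the observation that $v_G(x)$ parallel to $\partial/\partial r$ at $x\in V_p$ forces $f_q(u)=au$ with $a>1$ (hence, via Proposition \ref{prop:resumen}, $u\in C(\alpha)$ and $x\in O_n^{\delta}$ with $G(x)$ in the same $O_n$), combined with the preceding unnamed lemma bounding by two the lifts with $G(O_n^{\delta})\cap O_n\neq\emptyset$. Your explicit remark that the lifted $\mathbb R$--coordinates agree exactly, not merely modulo the deck group, is precisely the point the paper leaves implicit, and your handling of alternative (i) as vacuous matches the intended reading.
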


In the second part of the proof, we show that the lifts for which $v_G$ and $\partial/\partial r$ do not point to the same direction at $V_p$, for any $p \in \mathrm{Fix}(f_{|P})$, always have a fixed point on $B_M$. In view of the previous corollary this assertion concludes the proof.

 We have already described what happens in a neighborhood $\cup U_p$ of the set of fixed points in $P$. Since $f_{|P}$ has no fixed point on $P - \cup U_p$, by continuity, there exists $\delta_1$ such that every point in $D^2(\delta_1) \times (P - \cup U_p)$ is displaced tangentially to $P$ by $f$, that is, if $f((v, q)) = (f_q(v), q')$ then $q \neq q'$. As a consequence, for any lift $G$ of $f$, $v_G$ and $\partial/\partial r$ do not point to the same direction on $\mathbb R \times (0, \delta_1) \times (P - \cup U_p)$.

Take $\delta > 0$ smaller than all $\delta_p$ and $\delta_1$. The results from the previous paragraph and Lemma \ref{lem:entornopuntofijo} imply that there are at least $|d -1| - 2\#\mathrm{Fix}(f_{|P})$ lifts among $\{F, \tau F, \ldots, \tau^{|d-1|-1} F\}$ such that $v_G$ and $\partial/\partial r$ do not point to the same direction on the region $\{r = \delta\} = \mathbb R \times \{\delta\} \times P$. Say $G$ is one of them.

Take $M$ large enough so that \eqref{eq:tapas} holds for $G$ and $\delta$. Smooth out a small neighborhood of the edges of the cylinder $B_M$ to obtain a convex domain $B'_M$ diffeomorphic to a closed ball. We now define a vector field $w$ on $\partial B'_M$ to apply the results from Section \ref{sec:degree}.

\textbf{Case $d > 1$}. Let $w$ be the normal unitary vector on $\partial B'_M$ that point inwards. Note that $w$ coincides with $\partial/\partial r$ on the lateral face of $B'_M$. In the rest of $\partial B'_M$ the inequalities \eqref{eq:tapas} hold provided the smoothing region is small enough. By the choice of $G$ we conclude that $w$ never points to the same direction as $v_G$.

Since $B'_M$ is diffeomorphic to a ball and $w$ points inwards on its boundary, we can apply Lemma \ref{lem:degreevectorfields} (i) and (ii) to deduce that $j_{v_G}$ is not nulhomotopic. Then, Lemma \ref{lem:degreemap} concludes that $G$ has a fixed point inside $B'_M$, as desired.

\textbf{Case $d \le -1$}. Define $w$ as the unitary vector that points inwards on the lateral face of $B'_M$ and as the unitary vector that points outwards on the pieces of $\partial B'_M$ that are part of the left and right faces of $B_M$. Complete the definition of $w$ on $\partial B'_M$ by an interpolation that guarantees that in the smoothing region and close to the left face, $\{-M\} \times D^{m-1}(1-\delta)$, $w$ never points in the positive direction (increasing first coordinate), while close to the right face, $\{M\} \times D^{m-1}(1-\delta)$, $w$ never points in the negative direction (decreasing first coordinate). Again, by the choice of $G$, $w$ and $v_G$ never point to the same direction and, by Lemma \ref{lem:degreevectorfields} (iii) and (ii) and Lemma \ref{lem:degreemap} we conclude that $G$ has a fixed point in $B'_M$.

\bibliographystyle{plain}
\bibliography{biblioshub}

\end{document}